\theoremstyle{definition}
\newtheorem{dfn}{Definition}[section]
\newtheorem{thm}[dfn]{Theorem}
\newtheorem{prop}[dfn]{Proposition}
\newtheorem{cor}[dfn]{Corollary}
\title{Cotton tensor and conformal deformations of three-dimensional Ricci flow}
\author{Yoshihiro Umehara}
\date{}
\begin{document}

\maketitle

\begin{abstract}
In this paper, we study the deformation of the three-dimensional conformal structures by the Ricci flow. We drive the evolution equation of the Cotton-York tensor and the $L^1$-norm of it under the Ricci flow. In particular, we investigate the behavior of the $L^1$-norm of the Cotton-York tensor under the Ricci flow on three-dimensional simply-connected Riemannian homogeneous spaces which admit compact quotients. For a non-homogeneous case, we also investigate the behavior of the $L^1$-norm for the product metric of the Rosenau solution for the Ricci flow on $S^2$ and the standard metric of $S^1$. \footnote[0]{2010 Mathematics Subject Classification: Primary 53A30; Secondary 53C44.}
\end{abstract}

\section{Introduction}

Let $M^n$ be a $C^\infty$ manifold. A one-parameter family of Riemannian metrics $g(t)$ is called the {\it Ricci flow} if it satisfies
\begin{equation*}
\frac{\partial}{\partial t}g=-2\mathrm{Ric}_g.
\end{equation*}
We are interested in the properties of the Ricci flow from the viewpoint of three-dimensional conformal geometry. More precisely, we study the deformation of the three-dimensional conformal structures by the Ricci flow.

It is well known that the conformal flatness in dimension $n\ge4$ is equivalent to the vanishing of the Weyl tensor. In dimension $n=3$, the Weyl tensor vanishes identically, and hence the conformal flatness cannot be detected by the Weyl tensor. However, there is a conformally invariant tensor which in $n=3$ plays a role analogous to that of the Weyl tensor in $n\ge4$. This tensor is called the {\it Cotton tensor} and defined by   
\begin{equation*}
C_3=C_{ijk}:=\nabla_iR_{jk}-\nabla_jR_{ik}-\frac{1}{4}\left(\nabla_iRg_{jk}-\nabla_jRg_{ik}\right),
\end{equation*}
where $R_{ij}$ is the Ricci tensor, $R$ is the scalar curvature and $\nabla$ is the Levi-Civita connection. It can be shown that $C_3$ is conformally invariant and the conformal flatness is equivalent to $C_3=0$. By a direct computation, we can see that the following properties hold:
\begin{enumerate}
\item$C_{ijk}+C_{jik}=0$,
\item$C_{ijk}+C_{jki}+C_{kij}=0$,
\item$g^{ij}C_{ijk}=g^{ik}C_{ijk}=g^{jk}C_{ijk}=0$.
\end{enumerate}
We can write the Cotton tensor in an algebraically equivalent form which is called the {\it Cotton-York tensor} \cite{Y}
\begin{equation*}
C_2=C_{ij}:=g_{ik}\varepsilon^{klm}\left(\nabla_lR_{mj}-\frac{1}{4}\nabla_lRg_{mj}\right)=\frac{1}{2}g_{ik}\varepsilon^{klm}C_{lmj},
\end{equation*}
where $\varepsilon^{ijk}$ is a tensor constructed by using the completely anti-symmetric tensor density $\eta^{klm}$ of weight $+1$ with $\eta^{123}=1$ and the determinant of the metric tensor $g$ for the given coordinate system:
\begin{equation*}
\varepsilon^{klm}:=\frac{\eta^{klm}}{\sqrt{\det g}}.
\end{equation*}
The tensor $\varepsilon$ satisfies the following:
\begin{enumerate}
\item$\varepsilon_{ijk}\varepsilon^{ilm}=\delta_j^l\delta_k^m-\delta_j^m\delta_k^l$,
\item$\varepsilon_{ijk}\varepsilon^{ijl}=2\delta_k^l$,
\item$\varepsilon_{ijk}\varepsilon^{ijk}=6$,
\item$\nabla_i\varepsilon^{jkl}=\nabla_i\varepsilon_{jkl}=0$.
\end{enumerate}
From the relation $C_{ijk}=\varepsilon_{ijl}g^{lm}C_{mk}$, we can see that the conformal flatness is equivalent to $C_2=0$. The $(2,0)$-tensor $C_2$ has the following properties:
\begin{enumerate}
\item$C_{ij}=C_{ji}$  (symmetric),
\item$g^{ij}C_{ij}=0$  (trace-free),
\item$\nabla^iC_{ij}=0$  (divergence-free/transverse),
\item$|C_3|_g=\sqrt2|C_2|_g$, 
\end{enumerate}
where $|C_3|^2_g(x)=(g^{ip}g^{jq}g^{kr}C_{ijk}C_{pqr})(x)$ and $|C_2|^2_g(x)=({g^{ip}g^{jq}C_{ij}C_{pq}})(x)$.
We consider the $L^1$-norm of the Cotton-York tensor on a closed Riemannian manifold $(M^3,g)$
\begin{equation*}
C(g):=\int_{M^3}|C_2|_gd\mu_g, 
\end{equation*}
where $d\mu_g$ is the volume element of $g$. Note that the $L^1$-norm of the Cotton tensor differs from that of the Cotton-York tensor by $\sqrt 2$ multiple. By the property of the Cotton-York tensor, it is easy to see the conformal invariance of the $L^1$-norm and the equivalence between the vanishing of the $L^1$-norm and the conformal flatness. If the manifold is non-compact, we consider the $L^1$-norm on an arbitrary compact set $K$

\begin{equation*}
C_K(g):=\int_K|C_2|_gd\mu_g.
\end{equation*}
We are interested in the behavior of the $L^1$-norm under the Ricci flow.

As a fundamental result of the Ricci flow by R.~Hamilton \cite{H}, it is known that the Ricci flow starting at an initial metric with positive Ricci curvature on a three-dimensional closed manifold converges to a constant curvature metric up to scaling. Since a constant curvature metric is conformally flat, we can regard this result as a convergence to a conformally flat metric and a vanishing of the $L^1$-norm of the Cotton-York tensor. In general, the Ricci flow develops singularities, but it is not clear whether or not the conformal structure degenerates in the sense that the $L^1$-norm of the Cotton-York tensor blows up. This observations have motivated us to look into the properties of the Cotton-York tensor under the Ricci flow.

C.~Mantegazza, S.~Mongodi, and M.~Rimoldi \cite{Ma} described the evolution of the Cotton tensor $C_3$ under the Ricci flow. By using this evolution equation, we derive the evolution equation of the Cotton-York tensor $C_2$ and the $L^1$-norm of $C_2$ (\textbf{Proposition~\ref{cy}} and \textbf{Theorem~\ref{L}}). In particular, we investigate the behavior of the $L^1$-norm of the Cotton-York tensor on two separate contexts. The first is evolution of the $L^1$-norm under the Ricci flow on simply-connected three-dimensional Riemannian homogeneous spaces $M=G/H$ which admit compact quotients. Here $G$ is a transitive group of diffeomorphisms of $M$ and $H$ is the compact isotropy subgroup. We assume that $G$ is minimal, i.e.\ no proper subgroup of $G$ acts transitively on $M$. The second context is for the product metric of the Rosenau solution \cite{R} for the Ricci flow on $S^2$ (which is ancient and shrinks to a round point as $t\nearrow 0$) and the standard metric of $S^1$. Note that the product metric on $S^2\times S^1$ is also a solution to the Ricci flow. Recall the complete list of the Riemannian homogeneous spaces $M$ (\cite{M}, \cite{S}): $\mathbb{R}^3$, $\mathrm{SU}(2)$, $\widetilde{\mathrm{Isom}(\mathbb{R}^2)}$, $\widetilde{\mathrm{SL}(2,\mathbb{R})}$, the Heisenberg group, $\mathrm{Isom}(\mathbb{R}^1_1)$, $\mathbb{H}^3$, $\mathbb{H}^2\times \mathbb{R}$, and $S^2\times\mathbb{R}$, where the tildes denote the universal covering space, $\mathbb{R}^1_1$ is the two-dimensional Minkowski space, $\mathbb{H}^3$ is three-dimensional hyperbolic space, and $\mathbb{H}^2$ is two-dimensional hyperbolic space. Since homogeneous geometries on $\mathbb{H}^3$, $\mathbb{H}^2\times \mathbb{R}$, and $S^2\times\mathbb{R}$ are conformally flat, the $L^1$-norm of Cotton-York tensor for these geometries trivial. In the first six homogeneous spaces, for an arbitrary left invariant metric $g_0$, J.~Milnor \cite{M} provided a left invariant frame field $\{F_i\}_i^3$ (called the {\it Milnor frame} for $g_0$) such that
\begin{equation*}
g_0=A_0\,\omega^1\otimes\omega^1+B_0\,\omega^2\otimes\omega^2+C_0\,\omega^3\otimes\omega^3
\end{equation*}
where $A_0$, $B_0$, $C_0$ are positive constants and
\begin{equation*}
[F_2,F_3]=2\lambda F_1,\quad [F_3,F_1]=2\mu F_2,\quad [F_1,F_2]=2\nu F_3,
\end{equation*} 
where $\lambda,\mu,\nu\in \{-1,0,1\}$ and $\lambda\le\mu\le\nu$ are satisfied. Recall that the value of the triplet $\lambda$, $\mu$, $\nu$ completely determines the corresponding Lie group for the six homogeneous spaces. With respect to the Milnor frame, not only $g_0$ but also $\mathrm{Ric}_{g_0}$ are diagonalized. As $g_0$ and $\mathrm{Ric}_{g_0}$ remain diagonalized under the Ricci flow, it follows that the metric $g(t)$ evolves as 
\begin{equation*}
g(t)=A(t)\,\omega^1\otimes\omega^1+B(t)\,\omega^2\otimes\omega^2+C(t)\,\omega^3\otimes\omega^3
\end{equation*}
and that the Ricci flow equation becomes a system of three ODE's \cite{I} for $A(t)$, $B(t)$, and $C(t)$. J.~Isenberg and M.~Jackson \cite{I} studied the behavior of the normalized Ricci flow on all the homogeneous spaces. The behavior of the (unnormalized) Ricci flow on those spaces was studied by D.~Knopf and K.~McLeod \cite{K} (see also \cite{CK}). The Ricci flow on $\mathbb{R}^3$ is trivial. It becomes asymptotically round as $t\nearrow T<\infty$ on $\mathrm{SU}(2)$. It converges to the flat space as $t\nearrow\infty$ on $\widetilde{\mathrm{Isom}(\mathbb{R}^2)}$. For the other Lie groups, the each solution to the Ricci flow approaches a flat degenerate geometry of either two or one dimensions as $t\nearrow\infty$. We follow the calculations as done in these previous works.

We suppose that in the case of $\mathrm{SU}(2)$ and $\widetilde{\mathrm{SL}(2,\mathbb{R})}$, the initial metric $g_0$ satisfies $B_0=C_0$. The results for the six homogeneous spaces are summarized in the Table~1 (\textbf{Theorem~\ref{SU}}, \textbf{\ref{R2}}, \textbf{\ref{SL}}, \textbf{\ref{N}}, \textbf{\ref{R11}}). The main conclusions are the following:
\begin{enumerate}
\item In all cases, the each $L^1$-norm of the Cotton-York tensor converges to zero.  
\item If the initial metric $g_0$ on $\mathrm{SU}(2)$ satisfies $B_0=C_0$ and $A_0/B_0<1/2$, the $L^1$-norm has a unique local extremum at $t_0$ with $A(t_0)/B(t_0)=1/2$. 
\item In other cases, the each $L^1$-norm is strictly decreasing or identically zero.
\end{enumerate}
\begin{center}
\begin{tabular}{|l|l|}\hline
Lie group&Behavior of the $L^1$-norm $C(g)$, $C_K(g)$\\\hline
$\mathrm{SU}(2)$&$C(g)\to 0$ and it has a unique local extremum if $A_0/B_0<1/2$.\\
&$C(g)\searrow 0$ if $1/2\le A_0/B_0<1$ or $1<A_0/B_0$.\\
&$C(g)=0$ if $A_0=B_0$.\\
$\widetilde{\mathrm{Isom}(\mathbb{R}^2)}$&$C_K(g)\searrow 0$ if $A_0\ne B_0$.\\
&$C_K(g)=0$ if $A_0=B_0$.\\
$\widetilde{\mathrm{SL}(2,\mathbb{R})}$&$C_K(g)\searrow 0$.\\
Heisenberg&$C_K(g)\searrow 0$.\\
$\mathrm{Isom}(\mathbb{R}^1_1)$&$C_K(g)\searrow 0$.\\
$\mathbb{R}^3$&$C_K(g)=0$.\\\hline
\end{tabular}
\end{center}
\begin{center}
Table 1: The behavior of the $L^1$-norm of the Cotton-York tensor
\end{center}
The $L^1$-norm of the Cotton-York tensor for the product metric of the Rosenau solution and the standard metric of $S^1$ is strictly decreasing and converges to zero as $t\nearrow 0$ (\textbf{Theorem~\ref{Pro}}).

It is interesting that in these examples the $L^1$-norm of the solution to the Ricci flow starting at the initial metric with non-positive scalar curvature is strictly decreasing. The following are topics for further investigation:
\begin{itemize}
\item The monotonicity of the $L^1$-norm of the Cotton-York tensor (or the lack of it).
\item The characterization of the Riemannian manifold at the time which the $L^1$-norm takes a local extremum.
\end{itemize}
 
\noindent Acknowledgments.\, The author would like to thank Sumio Yamada and Shigetoshi Bando for their useful comments and encouragement.

\section{The evolution equation of the $L^1$-norm of the Cotton-York tensor}
For any tensor $T$, $S$ such as $T_{ij}$, $S_{ij}$, we define $\langle T,S\rangle_g:=g^{ip}g^{jq}T_{ij}S_{pq}$, $T^2:=T_{ik}g^{kl}T_{lj}$, $\mathrm{div}_gT:=\nabla^iT_{ij}$, and $\Delta_gT:=g^{ij}\nabla_i\nabla_jT_{ij}$. Our goal in this section is to derive the following evolution equation.
\begin{thm}\label{L}
Let $(M^3,g(t)), 0\le t<T$ be a solution of the Ricci flow on a closed manifold. Suppose the norm of the Cotton-York tensor $C_2$ does not vanish in $M\times[0,T)$. Then the $L^1$-norm of $C_2$ satisfies the following evolution equation
\begin{align*}
&\frac{d}{dt}\int_M|C_2|_{g}d\mu_g\\
&=\int_M\frac{1}{|C_2|_{g}}(\Delta_g|C_2|_g^2-2|\nabla C_2|_g^2-16\langle\mathrm{Ric}, C_2^2\rangle_g+7R|C_2|^2_{g}\\
&-4\langle\mathrm{Ric}, \mathrm{div}_gD\rangle_g+4\langle\mathrm{Ric}^2, \mathrm{div}_gC_3\rangle_g-2\langle\nabla R, \mathrm{div}_g(\mathrm{div}_gC_3)\rangle_g)d\mu_g,
\end{align*}
where $D=D_{ijk}:=C_{ijp}g^{pq}R_{qk}$. 
\end{thm}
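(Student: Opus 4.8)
The plan is to differentiate $C(g(t))=\int_M|C_2|_g\,d\mu_g$ directly and then feed in the evolution equation for $C_2$ established in Proposition~\ref{cy}. Writing $|C_2|_g=(|C_2|_g^2)^{1/2}$ and using that under the Ricci flow $\frac{\partial}{\partial t}d\mu_g=-R\,d\mu_g$, I would first record
\[
\frac{d}{dt}\int_M|C_2|_g\,d\mu_g=\int_M\frac{1}{|C_2|_g}\Bigl(\tfrac12\tfrac{\partial}{\partial t}|C_2|_g^2-R\,|C_2|_g^2\Bigr)\,d\mu_g ,
\]
so that every term carries the common weight $1/|C_2|_g$. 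This factorization, and indeed the differentiability in $t$ of $|C_2|_g$, is exactly where the hypothesis that $|C_2|_g$ does not vanish on $M\times[0,T)$ is used.

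The heart of the matter is the pointwise derivative $\frac{\partial}{\partial t}|C_2|_g^2=\frac{\partial}{\partial t}\bigl(g^{ip}g^{jq}C_{ij}C_{pq}\bigr)$. The variation of the two inverse metrics through $\frac{\partial}{\partial t}g^{ij}=2R^{ij}$ produces an algebraic coupling proportional to $\langle\mathrm{Ric},C_2^2\rangle_g$ (using that $C_2$ is symmetric), while the variation of the tensor itself contributes $2\langle C_2,\frac{\partial}{\partial t}C_2\rangle_g$. Into this last term I substitute the evolution equation of Proposition~\ref{cy}. For its top-order part I would invoke the Bochner-type identity $\langle C_2,\Delta_gC_2\rangle_g=\tfrac12\Delta_g|C_2|_g^2-|\nabla C_2|_g^2$ to trade the rough Laplacian acting on $C_2$ for the scalar Laplacian of $|C_2|_g^2$ together with a gradient term; combined with any further second-order contributions carried by the reaction part of Proposition~\ref{cy}, this produces the displayed diffusion terms $\Delta_g|C_2|_g^2-2|\nabla C_2|_g^2$. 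Crucially I would deliberately \emph{not} integrate by parts against the weight $1/|C_2|_g$, which is why $\Delta_g|C_2|_g^2$ survives explicitly in the final formula rather than cancelling.

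The remaining and, I expect, most laborious step is to massage the lower-order but higher-derivative reaction terms furnished by Proposition~\ref{cy} into the compact divergence form of the statement. Contracting $C_2$ against these terms, I would repeatedly use the algebraic symmetries of the Cotton tensor (its antisymmetry, the cyclic identity, and trace-freeness), the $\varepsilon$-contraction identities, the second Bianchi identity together with the Ricci commutation formulas, and the conversion $C_{ijk}=\varepsilon_{ijl}g^{lm}C_{mk}$ (equivalently $|C_3|_g=\sqrt2|C_2|_g$) relating the $C_3$-form of the Mantegazza--Mongodi--Rimoldi evolution to the $C_2$-form, until the resulting full contractions are recognized as $-16\langle\mathrm{Ric},C_2^2\rangle_g$, $-4\langle\mathrm{Ric},\mathrm{div}_gD\rangle_g$, $4\langle\mathrm{Ric}^2,\mathrm{div}_gC_3\rangle_g$, and $-2\langle\nabla R,\mathrm{div}_g(\mathrm{div}_gC_3)\rangle_g$, with $D_{ijk}=C_{ijp}g^{pq}R_{qk}$. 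Finally I would gather the scalar-curvature contributions---the $-R|C_2|_g^2$ from the volume variation and the scalar terms in the reaction---into the coefficient $7R|C_2|_g^2$, while the $\langle\mathrm{Ric},C_2^2\rangle_g$ term from the metric variation combines with the corresponding reaction term to give the coefficient $-16$; collecting everything under the common weight $1/|C_2|_g$ then yields the asserted identity. The principal difficulty is purely organizational: controlling the many third-derivative curvature terms and ensuring that the index manipulations close up exactly into the four divergence expressions above.
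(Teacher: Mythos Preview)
Your plan is correct and matches the paper's approach: the paper first records the pointwise evolution of $|C_2|_g^2$ as a separate Corollary~\ref{cor} (obtained exactly as you describe---differentiating the inverse metrics, contracting Proposition~\ref{cy} against $C_2$, and using the Bochner identity $2\langle C_2,\Delta_g C_2\rangle_g=\Delta_g|C_2|_g^2-2|\nabla C_2|_g^2$), and then Theorem~\ref{L} follows in one line from Corollary~\ref{cor} and $\frac{d}{dt}d\mu_g=-R\,d\mu_g$. The only small addendum is that in reducing the last reaction term the paper invokes the ready-made identity $\nabla^j\nabla^i C_{ijk}=-C_{klm}R^{lm}$ (quoted from \cite{Ch}) rather than rederiving it from Bianchi and commutation formulas as you propose, but this is a matter of packaging, not of method.
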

The evolution equation of the Cotton tensor $C_3$ under the Ricci flow is obtained by Mantegazza, Mongodi, and Rimoldi.
\begin{prop}\label{Cotton}(\cite{Ma})
Let $(M^3,g(t))$ be a solution of the Ricci flow. Then the Cotton tensor $C_3$ satisfies the following evolution equation  
\begin{equation*}
\begin{split}
\frac{\partial}{\partial t}C_{ijk}&=\Delta_g C_{ijk}+g^{pq}R_{pj}(C_{kqi}+C_{kiq})+5g^{pq}R_{kp}C_{jiq}+g^{pq}R_{pi}(C_{qkj}+C_{jkq})\\
&+2RC_{ijk}+2g^{pq}g^{rs}R_{pr}C_{sjq}g_{ki}-2g^{pq}g^{rs}R_{pr}C_{siq}g_{kj}\\
&+\frac{1}{2}\left(\nabla_i|\mathrm{Ric}|_g^2\right)g_{kj}-\frac{1}{2}\left(\nabla_j|\mathrm{Ric}|_g^2\right)g_{ki}+\frac{R}{2}(\nabla_jR)g_{ki}-\frac{R}{2}(\nabla_iR)g_{kj}\\
&+2g^{pq}R_{pi}\nabla_jR_{qk}-2g^{pq}R_{pj}\nabla_iR_{qk}+R_{kj}\nabla_iR-R_{ki}\nabla_jR.
\end{split}
\end{equation*}
\end{prop}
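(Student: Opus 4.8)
The plan is a direct, if lengthy, computation: differentiate the defining expression
\[
C_{ijk}=\nabla_iR_{jk}-\nabla_jR_{ik}-\tfrac14\left(\nabla_iR\,g_{jk}-\nabla_jR\,g_{ik}\right)
\]
in $t$, substitute the standard Ricci-flow variational formulas, and then reorganize everything using the special curvature identities available in dimension three. The basic ingredients I would use are the evolutions $\partial_t g^{ij}=2R^{ij}$, $\partial_t R=\Delta_gR+2|\mathrm{Ric}|_g^2$, and the variation of the connection
\[
\partial_t\Gamma^p_{ij}=-g^{pl}\left(\nabla_iR_{jl}+\nabla_jR_{il}-\nabla_lR_{ij}\right),
\]
together with the commutation rule $\partial_t(\nabla_iT)=\nabla_i(\partial_tT)-(\partial_t\Gamma)\ast T$. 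Applying the latter to $\nabla_iR_{jk}$ and to $\nabla_iR$ produces, besides the desired $\nabla_i(\partial_tR_{jk})$ and $\nabla_i(\partial_tR)$ terms, correction terms that are $\mathrm{Ric}$ contracted against a single derivative of curvature; these are precisely the seeds of the $g^{pq}R_{pi}\nabla_jR_{qk}$-type reaction terms in the stated equation.

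The crucial simplification is the vanishing of the Weyl tensor in $n=3$, which lets me write the full Riemann tensor as
\[
R_{ijkl}=R_{ik}g_{jl}+R_{jl}g_{ik}-R_{il}g_{jk}-R_{jk}g_{il}-\tfrac{R}{2}\left(g_{ik}g_{jl}-g_{il}g_{jk}\right).
\]
Feeding this into Hamilton's evolution $\partial_tR_{jk}=\Delta_gR_{jk}+2R_{jplk}R^{pl}-2R_{jp}R^p_k$ collapses every Riemann contraction into a polynomial in $\mathrm{Ric}$, $R$, and $g$; a short contraction gives the three-dimensional form
\[
\partial_tR_{jk}=\Delta_gR_{jk}+3R\,R_{jk}-6R_{jp}R^p_k+\left(2|\mathrm{Ric}|_g^2-R^2\right)g_{jk}.
\]
Substituting this and $\partial_tR$ into the differentiated definition of $C_{ijk}$ yields a leading-order part in which $\nabla_i(\Delta_gR_{jk})$, $\nabla_j(\Delta_gR_{ik})$, and the $\Delta_g(\nabla R)$ pieces must be reassembled into $\Delta_gC_{ijk}$. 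This is achieved by commuting $\Delta_g$ past the outer derivative in the definition of the Cotton tensor; the resulting commutators are governed by the Ricci identity, and in three dimensions their curvature factors again reduce, via the Riemann formula above, to the $R\ast C$ and $R\ast\nabla\mathrm{Ric}$ schematic families on the right-hand side.

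The genuine difficulty is the bookkeeping, not any single conceptual step. Each substitution spawns many terms, and the task is to recognize the correct groupings: antisymmetric combinations such as $g^{pq}R_{pi}\nabla_jR_{qk}-g^{pq}R_{pj}\nabla_iR_{qk}$ together with the various $\nabla|\mathrm{Ric}|_g^2$ and $\nabla R$ contractions, and then to fold the remaining antisymmetrized first derivatives of $\mathrm{Ric}$ back into Cotton-tensor components through the defining relation itself. I would lean heavily on the contracted second Bianchi identity $\nabla^pR_{pj}=\tfrac12\nabla_jR$ to eliminate divergences of $\mathrm{Ric}$, and on the algebraic symmetries $C_{ijk}+C_{jik}=0$, $C_{ijk}+C_{jki}+C_{kij}=0$, and the trace-free relations to cancel redundant terms and cut the number of independent reaction terms down to exactly the list displayed in the proposition. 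Concretely, I would sort the output into three blocks — the pure $\Delta_gC_{ijk}$ term, the terms linear in $C$ contracted with $\mathrm{Ric}$ or $R$, and the irreducible $\nabla\mathrm{Ric}$/$\nabla R$ source terms — and verify the coefficient of each block separately at the end, which is where the stated numerical factors (the $5$, the $2R$, the $\tfrac12\nabla|\mathrm{Ric}|_g^2$, and so on) are pinned down.
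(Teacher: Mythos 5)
First, a structural point: the paper itself contains \emph{no proof} of Proposition~\ref{Cotton}. The statement is imported verbatim from Mantegazza--Mongodi--Rimoldi \cite{Ma} (the text preceding it says exactly this), so the only meaningful comparison is with the proof in \cite{Ma} --- and your plan is essentially that proof: differentiate the defining expression of $C_{ijk}$ in time, insert the variations of the connection, of $\mathrm{Ric}$, and of $R$, and use the vanishing of the Weyl tensor in dimension three to reduce every Riemann contraction to $\mathrm{Ric}$, $R$, $g$. Your ingredients are correct where it counts: the three-dimensional decomposition of $R_{ijkl}$ and the reduced evolution
\begin{equation*}
\partial_t R_{jk}=\Delta_g R_{jk}+3R\,R_{jk}-6R_{jp}R^p_k+\bigl(2|\mathrm{Ric}|_g^2-R^2\bigr)g_{jk}
\end{equation*}
are both standard and mutually consistent. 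One slip: with your decomposition convention, in which $g^{jl}R_{ijkl}=R_{ik}$, the Lichnerowicz-type term should be written $2R_{jpkq}R^{pq}$ (second and fourth slots contracted); the expression $2R_{jplk}R^{pl}$ you wrote contracts the second and third slots, which has the opposite sign by antisymmetry in the last index pair. Since the displayed three-dimensional evolution of $\mathrm{Ric}$ that you actually feed into the computation is correct, this is a transcription inconsistency rather than a conceptual error.

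The genuine shortfall is that your proposal stops exactly where the content of the proposition begins. This statement is not a qualitative claim for which a strategy suffices: its entire content is the precise right-hand side --- the coefficient $5$ on $g^{pq}R_{kp}C_{jiq}$, the $2RC_{ijk}$, the particular combinations $C_{kqi}+C_{kiq}$ and $C_{qkj}+C_{jkq}$, the $\tfrac12\nabla|\mathrm{Ric}|_g^2$ and $\tfrac{R}{2}\nabla R$ source terms, and the asymmetry between the $g_{ki}$ and $g_{kj}$ blocks. You describe how the bookkeeping would be organized and say the numerical factors ``are pinned down'' at the end, but you never derive a single one of them, so the attempt as written establishes nothing that could be checked against the display, nor could it detect an error in it. The hard content sits precisely in the steps you defer: commuting $\Delta_g$ past the outer covariant derivative produces curvature commutator terms that must themselves be reduced by the 3D decomposition and then folded back into Cotton components via $C_{ijk}=\nabla_iR_{jk}-\nabla_jR_{ik}-\tfrac14(\nabla_iR\,g_{jk}-\nabla_jR\,g_{ik})$, and it is exactly there that the coefficient $5$ and the index arrangement of each reaction term emerge. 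In \cite{Ma} this occupies several pages of explicit computation; to count as a proof of the formula, your outline would have to be carried through at that level of detail.
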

By using Proposition \ref{Cotton}, we obtain the evolution equation of the Cotton-York tensor $C_2$ under the Ricci flow.    
\begin{prop}\label{cy}
Let $(M^3,g(t))$ be a solution of the Ricci flow. Then the Cotton-York tensor $C_2$ satisfies the following evolution equation
\begin{align*}
\frac{\partial}{\partial t}C_{ij}&=\Delta_g C_{ij}-5g^{pq}R_{ip}C_{qj}-5g^{pq}C_{iq}R_{pj}+2\langle C_2,\mathrm{Ric}\rangle_gg_{ij}+4RC_{ij}\\
&+\frac{1}{2}g_{ik}g_{jm}\varepsilon^{klm}\nabla_l|\mathrm{Ric}|_g^2+\frac{R}{2}g_{ik}g_{jl}\varepsilon^{klm}\nabla_mR+2g_{ik}g^{pq}\varepsilon^{klm}R_{pl}\nabla_mR_{qj}\\
&+g_{ik}\varepsilon^{klm}R_{jm}\nabla_lR.
\end{align*}
\end{prop}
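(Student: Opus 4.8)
The plan is to differentiate in $t$ the purely algebraic relation $C_{ij}=\tfrac12 g_{ik}\varepsilon^{klm}C_{lmj}$ between the Cotton--York tensor $C_2$ and the Cotton tensor $C_3$, and then insert the evolution equation for $C_3$ from Proposition~\ref{Cotton}. The Leibniz rule splits $\partial_t C_{ij}$ into three pieces, coming from $\partial_t g_{ik}$, $\partial_t\varepsilon^{klm}$ and $\partial_t C_{lmj}$. The first two are immediate under the Ricci flow: we have $\partial_t g_{ik}=-2R_{ik}$, and since $\varepsilon^{klm}=\eta^{klm}/\sqrt{\det g}$ with $\partial_t\log\det g=g^{ab}\partial_t g_{ab}=-2R$, we get $\partial_t\sqrt{\det g}=-R\sqrt{\det g}$ and hence $\partial_t\varepsilon^{klm}=R\varepsilon^{klm}$. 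Using the raised form $\varepsilon^{klm}C_{lmj}=2g^{kp}C_{pj}$ of the defining identity, the $\partial_t g$ piece collapses to $-2g^{pq}R_{ip}C_{qj}$ and the $\partial_t\varepsilon$ piece to $RC_{ij}$.

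The substantive part is the third piece $\tfrac12 g_{ik}\varepsilon^{klm}\,\partial_t C_{lmj}$, into which I substitute the right-hand side of Proposition~\ref{Cotton} after relabelling $(i,j,k)\mapsto(l,m,j)$, and then contract term by term against $\tfrac12 g_{ik}\varepsilon^{klm}$. Three structural facts drive every simplification. First, because $\nabla g=0$ and $\nabla\varepsilon=0$, the contraction commutes with $\Delta_g$, so the Laplacian term returns $\Delta_g C_{ij}$ directly. Second, $\varepsilon^{klm}$ is antisymmetric in $(l,m)$, so in each contracted summand only the part antisymmetric in those two slots survives; this is what lets the eight gradient summands coalesce into four and drives the reductions of the curvature summands. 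Third, the surviving contractions are rewritten in terms of $C_2$, $\mathrm{Ric}$, $R$ and their derivatives by repeated use of $\varepsilon_{ijk}\varepsilon^{ilm}=\delta_j^l\delta_k^m-\delta_j^m\delta_k^l$ together with the algebraic symmetries of $C_3$ (antisymmetry and the cyclic relation) and the inverse relation $C_{lmj}=\varepsilon_{lmp}g^{pq}C_{qj}$.

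I expect the main obstacle to be precisely this term-by-term conversion of the curvature-times-Cotton summands (terms two through seven of Proposition~\ref{Cotton}) from the $C_3$ language back into the $C_2$ language. The contractions arising from $g^{pq}R_{pm}(C_{jql}+C_{jlq})$, $5g^{pq}R_{jp}C_{mlq}$ and $g^{pq}R_{pl}(C_{qjm}+C_{mjq})$ all generate pieces of the form $g^{pq}R_{ip}C_{qj}$ and $g^{pq}C_{iq}R_{pj}$, and one must keep the numerical coefficients straight so that, after adding the $-2g^{pq}R_{ip}C_{qj}$ contributed by the $\partial_t g$ piece, they assemble into exactly $-5g^{pq}R_{ip}C_{qj}-5g^{pq}C_{iq}R_{pj}$; simultaneously the doubly traced summands involving $g^{pq}g^{rs}R_{pr}$ must collapse into the single term $2\langle C_2,\mathrm{Ric}\rangle_g g_{ij}$, and the various $R$-proportional remnants (including the $2RC_{ij}$ from the term $2RC_{lmj}$ and the $RC_{ij}$ from $\partial_t\varepsilon$) must add up to $4RC_{ij}$. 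The chief risk here is sign and bookkeeping errors in the repeated $\varepsilon$--$\varepsilon$ and $\varepsilon$--$C_3$ reductions.

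The gradient terms (terms eight through fifteen of Proposition~\ref{Cotton}) are the easiest part. They fall into four pairs, and in each pair the two summands are carried into one another, up to a sign, by relabelling the dummy indices $l\leftrightarrow m$ and using $\varepsilon^{kml}=-\varepsilon^{klm}$; consequently the two members reinforce and each pair collapses to a single term, yielding $\tfrac12 g_{ik}g_{jm}\varepsilon^{klm}\nabla_l|\mathrm{Ric}|_g^2+\tfrac{R}{2}g_{ik}g_{jl}\varepsilon^{klm}\nabla_m R+2g_{ik}g^{pq}\varepsilon^{klm}R_{pl}\nabla_m R_{qj}+g_{ik}\varepsilon^{klm}R_{jm}\nabla_l R$, which are exactly the last four terms in the statement. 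A useful check at the end is that the full right-hand side must be symmetric and trace-free in $(i,j)$, in accordance with the known properties of $C_2$; this catches most bookkeeping slips.
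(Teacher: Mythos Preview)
Your proposal is correct and follows essentially the same route as the paper: differentiate $C_{ij}=\tfrac12 g_{ik}\varepsilon^{klm}C_{lmj}$, use $\partial_t g_{ik}=-2R_{ik}$ and $\partial_t\varepsilon^{klm}=R\varepsilon^{klm}$ for the first two Leibniz pieces, then substitute Proposition~\ref{Cotton} into the third and reduce each summand via the $\varepsilon$--$\varepsilon$ identity and the symmetries of $C_3$. The paper carries out exactly this term-by-term computation (labelling the nineteen summands and pairing the gradient terms just as you describe), so your outline matches the intended proof.
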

\begin{cor}\label{cor}
Let $(M^3,g(t))$ be a solution of the Ricci flow. Then the squared norm of the Cotton-York tensor $C_2$ satisfies the following evolution equation
\begin{align*}
\frac{\partial}{\partial t}|C_2|^2_g&=\Delta_g|C_2|^2_g-2|\nabla C_2|^2_g-16\langle\mathrm{Ric}, C^2_2\rangle_g+8R|C_2|_g^2\\
&-4\langle\mathrm{Ric},\mathrm{div}_gD\rangle_g+4\langle\mathrm{Ric}^2,\mathrm{div}_gC_3\rangle_g-2\langle\nabla R,\mathrm{div}_g(\mathrm{div}_gC_3)\rangle_g.
\end{align*}
\end{cor}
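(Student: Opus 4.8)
The plan is to differentiate the pointwise squared norm directly, feeding in the evolution equation for $C_2$ from Proposition \ref{cy} and reorganizing the resulting terms. Writing $|C_2|^2_g = g^{ip}g^{jq}C_{ij}C_{pq}$ and recalling that under the Ricci flow the inverse metric evolves by $\partial_t g^{ip} = 2R^{ip}$, the product rule gives
\[
\partial_t|C_2|^2_g = 2R^{ip}g^{jq}C_{ij}C_{pq} + 2g^{ip}R^{jq}C_{ij}C_{pq} + 2\langle C_2, \partial_t C_2\rangle_g.
\]
Because $C_2$ is symmetric, the two metric-variation terms are equal and each equals $\langle\mathrm{Ric}, C_2^2\rangle_g$, so together they contribute $4\langle\mathrm{Ric}, C_2^2\rangle_g$; the remaining task is to expand $2\langle C_2,\partial_t C_2\rangle_g$ using Proposition \ref{cy}.

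Next I would substitute the terms of Proposition \ref{cy} into $2\langle C_2,\partial_t C_2\rangle_g = 2g^{ia}g^{jb}C_{ab}\,\partial_t C_{ij}$ and dispose of the purely algebraic contributions first. For the Laplacian term I would invoke the product rule $\Delta_g|C_2|^2_g = 2\langle C_2,\Delta_g C_2\rangle_g + 2|\nabla C_2|^2_g$, which turns $2\langle C_2,\Delta_g C_2\rangle_g$ into $\Delta_g|C_2|^2_g - 2|\nabla C_2|^2_g$, the first two terms of the claim. The two terms $-5g^{pq}R_{ip}C_{qj}$ and $-5g^{pq}C_{iq}R_{pj}$, contracted against $C_{ab}$ and using the symmetry of both $C_2$ and $\mathrm{Ric}$, each produce $-10\langle\mathrm{Ric},C_2^2\rangle_g$; combined with the $+4\langle\mathrm{Ric},C_2^2\rangle_g$ from the metric variation this yields the coefficient $-16$. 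The term $4RC_{ij}$ gives $8R|C_2|^2_g$, and the term $2\langle C_2,\mathrm{Ric}\rangle_g g_{ij}$ is annihilated upon contraction by the trace-freeness $g^{ij}C_{ij}=0$.

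The main obstacle is the last block: the four $\varepsilon$-terms of Proposition \ref{cy} (those containing $\nabla_l|\mathrm{Ric}|^2_g$, $\nabla_m R$, $R_{pl}\nabla_m R_{qj}$, and $R_{jm}\nabla_l R$) must be contracted with $2g^{ia}g^{jb}C_{ab}$ and reassembled into the three covariant divergence expressions $-4\langle\mathrm{Ric},\mathrm{div}_g D\rangle_g$, $4\langle\mathrm{Ric}^2,\mathrm{div}_g C_3\rangle_g$, and $-2\langle\nabla R,\mathrm{div}_g(\mathrm{div}_g C_3)\rangle_g$. Here I would use the algebraic equivalence $C_{ijk}=\varepsilon_{ijl}g^{lm}C_{mk}$ (equivalently $C_{ab}=\tfrac12 g_{ac}\varepsilon^{cde}C_{deb}$) to trade each explicit $\varepsilon$ against the $\varepsilon$ hidden inside $C_{ab}$, collapsing the products of two $\varepsilon$'s via $\varepsilon_{ijk}\varepsilon^{ilm}=\delta_j^l\delta_k^m-\delta_j^m\delta_k^l$ and its contractions, while moving $\varepsilon$ freely through covariant derivatives using $\nabla_i\varepsilon^{jkl}=0$. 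Recognizing the emerging patterns as $\nabla^i$ of $C_{ijp}g^{pq}R_{qk}=D_{ijk}$ and of $C_{ijk}$ itself is the delicate bookkeeping step. A useful consistency check is that, since $|C_3|^2_g = 2|C_2|^2_g$, the entire computation must agree with $\partial_t|C_2|^2_g=\tfrac12\partial_t|C_3|^2_g$ computed straight from Proposition \ref{Cotton}.
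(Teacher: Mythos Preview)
Your plan is exactly the paper's proof: differentiate $|C_2|^2_g$, collect the metric-variation contribution $4\langle\mathrm{Ric},C_2^2\rangle_g$, feed in Proposition~\ref{cy}, and collapse the $\varepsilon$-terms against $C_{ab}=\tfrac12 g_{ac}\varepsilon^{cde}C_{deb}$ via $\varepsilon_{ijk}\varepsilon^{ilm}=\delta_j^l\delta_k^m-\delta_j^m\delta_k^l$. Two small refinements will shorten the ``main obstacle'': the $\varepsilon$-terms carrying $\nabla_l|\mathrm{Ric}|^2_g$ and $R\nabla_m R$ vanish outright because after contraction they reduce to $\varepsilon^{klm}C_{km}=0$ (symmetry of $C_2$), so only the $R_{pl}\nabla_m R_{qj}$ term produces $-4\langle\mathrm{Ric},\mathrm{div}_gD\rangle_g+4\langle\mathrm{Ric}^2,\mathrm{div}_gC_3\rangle_g$, while the $R_{jm}\nabla_l R$ term yields $2(\nabla^r R)C_{rsj}R^{sj}$, which the paper converts to $-2\langle\nabla R,\mathrm{div}_g(\mathrm{div}_gC_3)\rangle_g$ using the identity $\nabla^j\nabla^i C_{ijk}=-C_{klm}R^{lm}$.
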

\begin{proof}[Proof of Proposition \ref{cy}]
Note that 
\begin{equation*}
\frac{\partial}{\partial t}\varepsilon^{klm}=R\varepsilon^{klm}.
\end{equation*}
Indeed,
\begin{align*}
\frac{\partial}{\partial t}\varepsilon^{klm}&=\frac{\partial}{\partial t}\left(\frac{\eta^{klm}}{\sqrt{\det (g_{ij})}}\right)=-\frac{\eta^{klm}}{\det (g_{ij})}\cdot\left(-R\sqrt{\det (g_{ij})}\right)=R\varepsilon^{klm}.
\end{align*}
By this equation and Proposition \ref{Cotton},
\begin{align*}
\frac{\partial}{\partial t}C_{ij}&=\frac{1}{2}\left(\frac{\partial}{\partial t}g_{ik}\right)\varepsilon^{klm}C_{lmj}+\frac{1}{2}g_{ik}\left(\frac{\partial}{\partial t}\varepsilon^{klm}\right)C_{lmj}+\frac{1}{2}g_{ik}\varepsilon^{klm}\left(\frac{\partial}{\partial t}C_{lmj}\right)\\
&=-R_{ik}\varepsilon^{klm}C_{lmj}+\frac{1}{2}g_{ik}R\varepsilon^{klm}C_{lmj}+\frac{1}{2}g_{ik}\varepsilon^{klm}\\
&\times\{\Delta_g C_{lmj}+g^{pq}R_{pm}(C_{jql}+C_{jlq})+5g^{pq}R_{jp}C_{mlq}+g^{pq}R_{pl}(C_{qjm}+C_{mjq})\\
&+2RC_{lmj}+2g^{pq}g^{rs}R_{pr}C_{smq}g_{jl}-2g^{pq}g^{rs}R_{pr}C_{slq}g_{jm}\\
&+\frac{1}{2}\left(\nabla_l|\mathrm{Ric}|_g^2\right)g_{jm}-\frac{1}{2}\left(\nabla_m|\mathrm{Ric}|_g^2\right)g_{jl}+\frac{R}{2}(\nabla_mR)g_{jl}-\frac{R}{2}(\nabla_lR)g_{jm}\\
&+2g^{pq}R_{pl}\nabla_mR_{qj}-2g^{pq}R_{pm}\nabla_lR_{qj}+R_{jm}\nabla_lR-R_{jl}\nabla_mR\}.
\end{align*}
We compute each term by using the identities $C_{ij}=\frac{1}{2}g_{ik}\varepsilon^{klm}C_{lmj}$, $C_{ijk}=\varepsilon_{ijl}g^{lm}C_{mk}$, and the properties of $C_3$, $C_2$, $\varepsilon$.  
\begin{align*}
&(\textrm{1st term of RHS})=-R_{ik}\varepsilon^{klm}\varepsilon_{lmp}g^{pq}C_{qj}=-R_{ik}\cdot2\delta_p^kg^{pq}C_{qj}=-2R_{ip}g^{pq}C_{qj}.\\
&(\textrm{2nd})=RC_{ij}.\\
&(\textrm{3rd})=\frac{1}{2}g_{ik}\varepsilon^{klm}\Delta_g C_{lmj}=\Delta_g\left(\frac{1}{2}g_{ik}\varepsilon^{klm}C_{lmj}\right)=\Delta_g C_{ij}.\\
&(\textrm{4th})=\frac{1}{2}g_{ik}\varepsilon^{klm}g^{pq}R_{pm}C_{jql}=\frac{1}{2}g_{ik}\varepsilon^{klm}g^{pq}R_{pm}(-C_{qlj}-C_{ljq})\\
&=-\frac{1}{2}g_{ik}\varepsilon^{klm}g^{pq}R_{pm}\varepsilon_{qlr}g^{rs}C_{sj}-\frac{1}{2}g_{ik}\varepsilon^{klm}g^{pq}R_{pm}\varepsilon_{ljr}g^{rs}C_{sq}\\
&=-\frac{1}{2}g_{ik}g^{pq}g^{rs}(\delta^k_q\delta^m_r-\delta^k_r\delta^m_q)R_{pm}C_{sj}+\frac{1}{2}g_{ik}g^{pq}g^{rs}(\delta^k_j\delta^m_r-\delta^k_r\delta^m_j)R_{pm}C_{sq}\\
&=-\frac{1}{2}g^{rs}R_{ir}C_{sj}+\frac{1}{2}RC_{ij}+\frac{1}{2}\langle C_2,\mathrm{Ric}\rangle_gg_{ij}-\frac{1}{2}g^{pq}C_{iq}R_{pj}.\\
&(\textrm{5th})=\frac{1}{2}g_{ik}\varepsilon^{klm}g^{pq}R_{pm}C_{jlq}=\frac{1}{2}g_{ik}\varepsilon^{klm}g^{pq}R_{pm}\varepsilon_{jlr}g^{rs}C_{sq}\\
&=\frac{1}{2}g_{ik}g^{pq}g^{rs}(\delta^k_j\delta^m_r-\delta^k_r\delta^m_j)R_{pm}C_{sq}=\frac{1}{2}g_{ij}g^{pq}g^{rs}R_{pr}C_{sq}-\frac{1}{2}g_{ir}g^{pq}g^{rs}R_{pj}C_{sq}\\
&=\frac{1}{2}\langle C_2,\mathrm{Ric}\rangle_gg_{ij}-\frac{1}{2}g^{pq}C_{iq}R_{pj}.\\
&(\textrm{6th})=\frac{5}{2}g_{ik}\varepsilon^{klm}g^{pq}R_{jp}\varepsilon_{mlr}g^{rs}C_{sq}=-\frac{5}{2}g_{ik}g^{pq}g^{rs}\cdot2\delta^k_rR_{jp}C_{sq}=-5g^{pq}C_{iq}R_{pj}.\\
&(\textrm{7th})=\frac{1}{2}g_{ik}\varepsilon^{klm}g^{pq}R_{pl}C_{qjm}=-\frac{1}{2}g_{ik}\varepsilon^{klm}g^{pq}R_{pl}C_{jqm}=(\textrm{4th}).\\
&(\textrm{8th})=\frac{1}{2}g_{ik}\varepsilon^{klm}g^{pq}R_{pl}C_{mjq}=-\frac{1}{2}g_{ik}\varepsilon^{klm}g^{pq}R_{pl}C_{jmq}=(\textrm{5th}).\\
&(\textrm{9th})=2RC_{ij}.\\
&(\textrm{10th})=g_{ik}\varepsilon^{klm}g^{pq}g^{rs}R_{pr}\varepsilon_{sma}g^{ab}C_{bq}g_{jl}=-g_{ik}g^{pq}g^{rs}g^{ab}g_{jl}(\delta^k_s\delta^l_a-\delta^k_a\delta^l_s)R_{pr}C_{bq}\\
&=-g^{pq}R_{pi}C_{jq}+g^{pq}C_{iq}R_{pj}.\\
&(\textrm{11th})=\frac{1}{2}g_{ik}\varepsilon^{kml}\cdot(-2)g^{pq}g^{rs}R_{pr}C_{smq}g_{jl}=\frac{1}{2}g_{ik}\varepsilon^{klm}\cdot2g^{pq}g^{rs}R_{pr}C_{smq}g_{jl}=(\textrm{10th}).\\
&(\textrm{12th})=\frac{1}{4}g_{ik}g_{jm}\varepsilon^{klm}\nabla_l|\mathrm{Ric}|_g^2.\\
&(\textrm{13th})=\frac{1}{2}g_{ik}\varepsilon^{klm}\cdot\left(-\frac{1}{2}\nabla_m|\mathrm{Ric}|_g^2\right)g_{jl}=\frac{1}{2}g_{ik}\varepsilon^{kml}\cdot\frac{1}{2}\left(\nabla_m|\mathrm{Ric}|_g^2\right)g_{jl}=(\textrm{12th}).\\
&(\textrm{14th})=\frac{R}{4}g_{ik}g_{jl}\varepsilon^{klm}\nabla_mR.\\
&(\textrm{15th})=\frac{1}{2}g_{ik}\varepsilon^{klm}\cdot\left(-\frac{R}{2}\nabla_lR\right)g_{jm}=\frac{1}{2}g_{ik}\varepsilon^{kml}\cdot\frac{R}{2}(\nabla_lR)g_{jm}=(\textrm{14th}).\\
&(\textrm{16th})=g_{ik}g^{pq}\varepsilon^{klm}R_{pl}\nabla_mR_{qj}.\\
&(\textrm{17th})=\frac{1}{2}g_{ik}\varepsilon^{klm}\cdot(-2)g^{pq}R_{pm}\nabla_lR_{qj}=\frac{1}{2}g_{ik}\varepsilon^{kml}\cdot2g^{pq}R_{pm}\nabla_lR_{qj}=(\textrm{16th}).\\
&(\textrm{18th})=\frac{1}{2}g_{ik}\varepsilon^{klm}R_{jm}\nabla_lR.\\
&(\textrm{19th})=\frac{1}{2}g_{ik}\varepsilon^{klm}\cdot(-1)R_{jl}\nabla_mR=\frac{1}{2}g_{ik}\varepsilon^{kml}R_{jl}\nabla_mR=(\textrm{18th}).
\end{align*}
Hence, we obtain the result.
\end{proof}

\begin{proof}[Proof of Corollary \ref{cor}]
By Proposition \ref{cy}, 
\begin{align*}
&\frac{\partial}{\partial t}|C_2|^2_g\\
&=2\left(\frac{\partial}{\partial t}g^{i_1i_2}\right)g^{j_1j_2}C_{i_1j_1}C_{i_2j_2}+2g^{i_1i_2}g^{j_1j_2}\left(\frac{\partial}{\partial t}C_{i_1j_1}\right)C_{i_2j_2}\\
&=4R^{i_1i_2}C_{i_1i_2}^2+2g^{i_1i_2}g^{j_1j_2}\\
&\times\{\Delta_g C_{i_1j_1}-5g^{pq}R_{i_1p}C_{qj_1}-5g^{pq}C_{i_1q}R_{pj_1}+2\langle C_2,\mathrm{Ric}\rangle_gg_{i_1j_1}+4RC_{i_1j_1}\\
&+\frac{1}{2}g_{i_1k}g_{j_1m}\varepsilon^{klm}\nabla_l|\mathrm{Ric}|_g^2+\frac{R}{2}g_{i_1k}g_{j_1l}\varepsilon^{klm}\nabla_mR+2g_{i_1k}g^{pq}\varepsilon^{klm}R_{pl}\nabla_mR_{qj_1}\\
&+g_{i_1k}\varepsilon^{klm}R_{j_1m}\nabla_lR\}\times C_{i_2j_2}.
\end{align*}
We compute each term by using $\frac{1}{2}g_{ik}\varepsilon^{klm}C_{lmj}$, $C_{ijk}=\varepsilon_{ijl}g^{lm}C_{mk}$, and the properties of $C_3$, $C_2$, $\varepsilon$. 
\begin{align*}
&(\textrm{1st term of RHS})=4\langle\mathrm{Ric}, C^2_2\rangle_g.\\
&(\textrm{2nd})=2g^{i_1i_2}g^{j_1j_2}(\Delta_g C_{i_1j_1})C_{i_2j_2}=2\langle\Delta C_2, C_2\rangle_g=\Delta_g|C_2|^2_g-2|\nabla C_2|^2_g.\\
&(\textrm{3rd})=-10g^{i_1i_2}g^{pq}R_{i_1p}C^2_{i_2q}=-10\langle\mathrm{Ric}, C_2^2\rangle_g.\\
&(\textrm{4th})=-10g^{j_1j_2}g^{pq}C^2_{qj_2}R_{pj_1}=-10\langle C^2_2,\mathrm{Ric}\rangle_g.\\
&(\textrm{5th})=4\langle C_2,\mathrm{Ric}\rangle_gg^{i_2j_2}C_{i_2j_2}=4\langle C_2,\textrm{Ric}\rangle_g\mathrm{tr}_gC_2=0.\\
&(\textrm{6th})=8RC_{i_1j_1}g^{i_1i_2}g^{j_1j_2}C_{i_2j_2}=8R|C_2|_g^2.\\
&(\textrm{7th})=\delta_k^{i_2}\delta^{j_2}_m\varepsilon^{klm}\nabla_l|\mathrm{Ric}|_g^2C_{i_2j_2}=\varepsilon^{klm}(\nabla_l|\mathrm{Ric}|_g^2)C_{km}=0.\\
&(\textrm{8th})=\delta_k^{i_2}\delta^{j_2}_lR\varepsilon^{klm}\nabla_mRC_{i_2j_2}=R\varepsilon^{klm}(\nabla_mR)C_{kl}=0.\\
&(\textrm{9th})=4g^{i_1i_2}g^{j_1j_2}g_{i_1k}g^{pq}\varepsilon^{klm}R_{pl}(\nabla_mR_{qj_1})\cdot\frac{1}{2}g_{i_2a}\varepsilon^{ars}C_{rsj_2}\\
&=2g^{pq}\varepsilon^{i_2lm}\varepsilon_{i_2rs}R_{pl}(\nabla_mR_{qj_1})C^{rsj_1}=2g^{pq}(\delta^l_r\delta^m_s-\delta^l_s\delta^m_r)R_{pl}(\nabla_mR_{qj_1})C^{rsj_1}\\
&=-4g^{pq}R_{pr}C^{srj_1}\nabla_sR_{j_1q}=-4g^{pq}R_{pr}\nabla_s(C^{srj_1}R_{j_1q})+4g^{pq}R_{pr}(\nabla_sC^{srj_1})R_{j_1q}\\
&=-4\langle\mathrm{Ric},\mathrm{div}_gD\rangle_g+4\langle\mathrm{Ric}^2,\mathrm{div}_gC_3\rangle_g.\\
&(\textrm{10th})=2g^{i_1i_2}g^{j_1j_2}g_{i_1k}\varepsilon^{klm}R_{j_1m}(\nabla_lR)\cdot\frac{1}{2}g_{i_2a}\varepsilon^{ars}C_{rsj_2}\\
&=\varepsilon^{i_2lm}\varepsilon_{i_2rs}R_{j_1m}(\nabla_lR)C^{rsj_1}=(\delta^l_r\delta^m_s-\delta^l_s\delta^m_r)R_{j_1m}(\nabla_lR)C^{rsj_1}\\
&=2(\nabla^rR)C_{rsj_1}R^{sj_1}=2(\nabla_rR)(-\nabla^q\nabla^pC_{pqr})\\
&=-2\langle\nabla R,\mathrm{div}_g(\mathrm{div}_gC_3)\rangle_g,
\end{align*}
where we use the identity $\nabla^j\nabla^iC_{ijk}=-C_{klm}R^{lm}$ (see for example \cite[p.\ 9]{Ch}). Hence, we obtain the result.
\end{proof}
Theorem \ref{L} follows from Corollary \ref{cor} and $\frac{d}{dt}d\mu_g=-R\mathit{d\mu_g}$.

\section{Examples of behavior of the $L^1$-norm}
\subsection{The Lie group $\mathrm{SU}(2)$}
We consider the Ricci flow $g(t)$ starting at a left invariant metric $g_0$ on $\mathrm{SU}(2)$, and fix a Milnor frame for $g_0$ such that $\lambda=\mu=\nu=-1$. Note that $\mathrm{SU}(2)$ is identified topologically with standard three-sphere of radius one embedded in $\mathbb{R}^4$.

The Ricci tensor of $g$ is
\begin{equation*}
\begin{split}
R(F_1,F_1)&=4-2\frac{B^2+C^2-A^2}{BC},\\
R(F_2,F_2)&=4-2\frac{C^2+A^2-B^2}{CA},\\
R(F_3,F_3)&=4-2\frac{B^2+A^2-C^2}{BA}.
\end{split}
\end{equation*}
Then the Ricci flow equation is equivalent to the system of ODE's  
\begin{equation*}
\left\{
\begin{split}
&\frac{d}{dt}A=-8+4\frac{B^2+C^2-A^2}{BC},\\
&\frac{d}{dt}B=-8+4\frac{C^2+A^2-B^2}{CA},\\
&\frac{d}{dt}C=-8+4\frac{B^2+A^2-C^2}{BA}.
\end{split}
\right.
\end{equation*}
\begin{prop}(\cite[Proposition 1.17]{CK})
For any choice of initial data $A_0$, $B_0$, $C_0>0$, the unique solution $g(t)$ exists for a maximal finite time interval $0\le t<T<\infty$. The metric $g(t)$ becomes asymptotically round as $t\nearrow T$. 
\end{prop}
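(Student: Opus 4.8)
The plan is to treat the system purely as a three-dimensional ODE on the open octant $\Omega=\{(A,B,C):A,B,C>0\}$, on which the right-hand side is real-analytic. First I would invoke Picard--Lindel\"of to obtain a unique maximal solution on some interval $[0,T)$, together with the standard escape dichotomy: either $T=\infty$, or the solution leaves every compact subset of $\Omega$ as $t\nearrow T$. Next I would record that the diagonal hyperplanes are invariant. A direct computation gives
\[
\frac{d}{dt}(B-A)=\frac{4(B-A)(C-A-B)(A+B+C)}{ABC},
\]
together with its cyclic analogues, and these are linear and homogeneous in the differences; hence the ordering of $A,B,C$ is preserved, and I may assume $A_0\le B_0\le C_0$, so that $A(t)\le B(t)\le C(t)$ throughout.

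The core of the argument is a rescaling that turns the roundness question into a two-dimensional phase portrait. Set $x=A/C$ and $y=B/C$, so that $0<x\le y\le 1$, and reparametrize time by $d\tau=\tfrac{8C}{AB}\,dt$. Starting from
\[
\frac{d}{dt}\log\frac{A}{C}=\frac{8(C-A)(C+A-B)}{ABC},\qquad \frac{d}{dt}\log\frac{B}{C}=\frac{8(C-B)(C+B-A)}{ABC},
\]
I would reduce the system to the autonomous planar system
\[
\frac{d}{d\tau}\log x=(1-x)(1+x-y),\qquad \frac{d}{d\tau}\log y=(1-y)(1+y-x).
\]
Because $0<x\le y\le 1$ forces $1+x-y\ge x>0$ and $1+y-x\ge 1>0$, both right-hand sides are nonnegative, so $x$ and $y$ increase monotonically and are bounded above by $1$; hence each converges. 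Since the limiting values must annihilate both right-hand sides and $(1,1)$ is the only common zero in the closed region $\{0<x\le y\le 1\}$, I conclude $x,y\to 1$, i.e.\ the metric becomes asymptotically round. (Here one must check that $\tau\to\infty$ as $t\nearrow T$; this follows because once $x,y$ are close to $1$ the three functions are comparable, $\tfrac{8C}{AB}\sim 8/\rho$ with $\rho$ the common scale, and $\int^{T}\rho^{-1}\,dt$ diverges.)

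For finiteness of $T$ I would use the scalar curvature, which is spatially constant and obeys $\tfrac{d}{dt}R=2|\mathrm{Ric}|_g^2$; once the solution is round enough its Ricci curvature is positive, so $\tfrac{d}{dt}R\ge\tfrac{2}{3}R^2$ and $R$ blows up in finite time, while the volume $\propto\sqrt{ABC}$ decays to $0$; equivalently, near the round configuration $A,B,C\sim 4(T-t)$. Combined with the escape dichotomy this yields $T<\infty$ with all of $A,B,C\to 0$. The main obstacle I anticipate is not the monotonicity itself but the two bookkeeping points that make it rigorous: showing that the reparametrized time $\tau$ actually exhausts $[0,\infty)$, so that the monotone limit is genuinely attained, and confirming that the absolute scale stays controlled --- that no variable escapes to $\infty$ nor collapses to $0$ prematurely --- before the common collapse at $T$. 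Both reduce to quantitative control of the ratios $x,y$ and of $C$, which the monotonicity above supplies but which must be assembled carefully.
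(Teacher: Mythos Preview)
The paper does not give its own proof of this proposition; it is quoted verbatim as \cite[Proposition~1.17]{CK} and used as a black box. So there is no in-paper argument to compare your proposal against. Your outline is, in spirit, the standard one from \cite{CK}: preserve the ordering $A\le B\le C$, reduce to scale-free variables, and show the ratios tend to~$1$. The difference formulas and the planar system in $(x,y)=(A/C,B/C)$ are correct, and the monotonicity $x,y\nearrow$ with $x,y\le 1$ is exactly right.

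Where your write-up needs repair is the \emph{order} of the argument, which as stated is circular. You justify $\tau\to\infty$ by saying ``once $x,y$ are close to $1$ the three functions are comparable,'' but that is precisely the conclusion you are trying to reach. Likewise, your finiteness-of-$T$ argument via $\tfrac{d}{dt}R\ge\tfrac{2}{3}R^2$ needs $R>0$, which can fail initially (take $A=B=1$, $C$ large) and is only guaranteed once the metric is already nearly round. The clean way to untangle this is to prove $T<\infty$ \emph{first} and \emph{elementarily}: from $A\le B\le C$ one has $A^2-AB+B^2\le B^2\le C^2$, hence $\tfrac{A^2+B^2-C^2}{AB}\le 1$ and therefore $\tfrac{dC}{dt}\le -4$, giving $T\le C_0/4$. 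Next, since $x,y$ are nondecreasing you already have the uniform comparability $x_0C\le A\le C$, $y_0C\le B\le C$ from time zero; combined with $C\le C_0$ and the escape dichotomy this forces $C\to 0$. A matching lower bound $\tfrac{dC}{dt}\ge -8-4/(x_0y_0)$ then yields $C(t)\asymp T-t$, so $\int_0^T C^{-1}\,dt=\infty$; since $\tfrac{8C}{AB}\ge \tfrac{8}{C}$ you get $\tau\to\infty$. Only now does the fixed-point argument for the planar system legitimately give $x,y\to 1$. All of your ingredients survive; they just need to be assembled in this order.
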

Now we are interested in the behavior of the $L^1$-norm of the Cotton-York tensor $C_2$. Since the $L^1$-norm is very complicated for general initial data, we assume that $B_0=C_0$. Then $B(t)=C(t)$ holds from the symmetry in the Ricci flow equation.
\begin{thm}\label{SU}
For any choice of initial data $A_0$, $B_0=C_0>0$, the behavior of the $L^1$-norm $C(g)$ of the Cotton-York is the following: 
\begin{enumerate}
\item If $0<A_0/B_0<1/2$, $C(g)$ has a unique local extremum at $t_0$ with $A(t_0)/B(t_0)=1/2$ and converges to zero as $t\to T$.
\item If $1/2\le A_0/B_0<1$ or $1<A_0/B_0$, $C(g)$ is strictly decreasing and converges to zero as $t\to T$.
\item If $A_0=B_0$, $C(g)$ is identically zero.
\end{enumerate}
\end{thm}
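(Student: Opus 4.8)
The plan is to exploit homogeneity to bypass the general evolution equation of Theorem~\ref{L} entirely. Since $g(t)$ is left invariant, the Cotton--York tensor $C_2$ and hence its pointwise norm $|C_2|_g$ are left invariant; as $\mathrm{SU}(2)$ acts transitively, $|C_2|_g$ is a \emph{constant} function on $M$ for each fixed $t$. Therefore
\begin{equation*}
C(g)=\int_{\mathrm{SU}(2)}|C_2|_g\,d\mu_g=|C_2|_g\cdot\mathrm{Vol}(g),
\end{equation*}
and the problem reduces to computing the two functions $|C_2|_g$ and $\mathrm{Vol}(g)$ explicitly in terms of $A$ and $B$ (recall $C=B$), and then differentiating in $t$ through the ODE system for $A(t),B(t)$.

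Second, I would compute $|C_2|_g$ algebraically. On a homogeneous space the scalar curvature $R$ is constant, so $\nabla R=0$ and the Cotton tensor collapses to $C_{ijk}=\nabla_iR_{jk}-\nabla_jR_{ik}$. In the Milnor frame the Ricci tensor is diagonal with the components recorded above (with $B=C$ these are $R_{11}=2A^2/B^2$ and $R_{22}=R_{33}=4-2A/B$), and the connection coefficients $\nabla_{F_i}F_j$ are constants obtained from Koszul's formula applied to the structure relations $[F_2,F_3]=-2F_1$, $[F_3,F_1]=-2F_2$, $[F_1,F_2]=-2F_3$. Since the $R_{jk}$ are constants, each covariant derivative is purely algebraic, $\nabla_iR_{jk}=-R(\nabla_{F_i}F_j,F_k)-R(F_j,\nabla_{F_i}F_k)$, and one checks that a single scalar $p:=R_{11}-\tfrac{A}{B}R_{22}$ governs all the nonzero components of $C_3$. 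Contracting with the diagonal metric then yields $|C_2|_g$ as an explicit multiple of $|p|/(\sqrt{A}\,B)$, with $p=\tfrac{4A}{B}(A/B-1)$; in particular $|C_2|_g$ vanishes exactly when $A=B$, consistent with the round metric being conformally flat.

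Third, combining this with $\mathrm{Vol}(g)=v_0\sqrt{ABC}=v_0 B\sqrt{A}$ (where $v_0$ is the coframe volume of $\mathrm{SU}(2)$), the $\sqrt{A}\,B$ factors cancel and one obtains, as forced by the conformal (hence scale) invariance of $C(g)$, a function of the single ratio $x:=A/B$:
\begin{equation*}
C(g)=\mathrm{const}\cdot x\,|x-1|.
\end{equation*}
The behavior is then read off from the scalar ODE $\dot x=\tfrac{8A}{B^2}(1-x)$, whose positive prefactor shows that $x$ increases to $1$ if $x_0<1$ and decreases to $1$ if $x_0>1$, the limit $x\to1$ being exactly the asymptotic roundness as $t\nearrow T$. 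Writing $C(g)=F(x)$ and computing $\frac{d}{dt}C(g)=F'(x)\dot x$, the sign is controlled by $F'(x)(1-x)$: on $(0,1)$ we have $F(x)=\mathrm{const}\cdot x(1-x)$, strictly increasing up to $x=1/2$ and strictly decreasing afterward, which produces the unique extremum at the time $t_0$ with $x(t_0)=1/2$ when $x_0<1/2$, and strict monotonicity when $1/2\le x_0<1$; on $(1,\infty)$, $F(x)=\mathrm{const}\cdot x(x-1)$ is increasing while $\dot x<0$, giving strict decrease. The degenerate case $x_0=1$ gives $x\equiv1$ and $C(g)\equiv0$.

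The main obstacle is the middle step: carrying out the Koszul computation of the connection coefficients and the covariant derivatives of the Ricci tensor carefully enough to identify all nonzero components of $C_3$ and evaluate $|C_2|_g$ without sign or index errors. Once the clean closed form $C(g)=\mathrm{const}\cdot x\,|x-1|$ is in hand, the dynamical conclusions are immediate from the one-dimensional ODE for $x$; the only external input needed is the convergence $x\to1$, which follows either from the cited asymptotic roundness or directly from monotonicity together with the blow-up of $1/B$ as $t\nearrow T$.
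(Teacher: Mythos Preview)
Your proposal is correct and follows essentially the same approach as the paper: reduce $C(g)$ to a constant multiple of $x\,|x-1|$ with $x=A/B$, derive the scalar ODE $\dot x=\tfrac{8A}{B^2}(1-x)$, and read off the three cases from the monotone dynamics of $x$ toward $1$. The only minor difference is that the paper records the explicit Cotton--York components and constant $4\sqrt6$, and obtains $x\to1$ via l'H\^opital applied to $A/B=\lim A'/B'$, whereas you invoke the cited asymptotic roundness; your alternative suggestion (monotonicity plus $1/B\to\infty$) would need a little more care to be a self-contained argument.
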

\begin{proof}
In this case, the Ricci flow equation is reduced to
\begin{equation*}
\frac{d}{dt}A=-4\left(\frac{A}{B}\right)^2,\quad\frac{d}{dt}B=-8+4\frac{A}{B},
\end{equation*}
and the scalar curvature is 
\begin{equation*}
R=\frac{2(4B-A)}{B^2}.
\end{equation*}
Note that $A_0/B_0=1$, $A_0/B_0<1$, and $A_0/B_0>1$ are preserved under the Ricci flow, and $\lim_{t\nearrow T}A=\lim_{t\nearrow T}B=0$ in all cases.

The Cotton-York tensor is 
\begin{equation*}
C_2(F_1,F_1)=8\frac{A^\frac{3}{2}}{B^2}\left(\frac{A}{B}-1\right),\quad C_2(F_2,F_2)=C_2\left(F_3,F_3\right)=4\frac{A^\frac{1}{2}}{B}\left(1-\frac{A}{B}\right).
\end{equation*}
Then for an arbitrary compact set $K$,
\begin{equation*}
\int_{K}|C_2(t)|_{g(t)}d\mu_{g(t)}=
4\sqrt6\frac{A}{B}\left|\frac{A}{B}-1\right|\mathrm{Vol}(K,g_{S^3}),
\end{equation*}
where $g_{S^3}$ is the standard metric of radius one on $S^3$. In particular,
\begin{equation*}
\int_{S^3}|C_2(t)|_{g(t)}\mathit{d\mu_{g(t)}}=
\begin{cases}
4\sqrt6A/B(1-A/B)\mathrm{Vol}(S^3,g_{S^3}), & 0<A_0/B_0\le1,\\
4\sqrt6A/B(A/B-1)\mathrm{Vol}(S^3,g_{S^3}), & 1\le A_0/B_0. 
\end{cases}
\end{equation*}
If $A_0=B_0$, $C(g)$ is identically zero. We assume that $A_0\ne B_0$. We show that as $t\nearrow T$, $A/B\nearrow 1$ if $A_0/B_0<1$ and $A/B\searrow 1$ if $A_0/B_0>1$. Indeed, 
\begin{equation*}
\frac{d}{dt}\frac{A}{B}=\frac{-4\left(A/B\right)^2B-A\left(-8+4(A/B)\right)}{B^2}
=8\frac{A}{B^2}\left(1-\frac{A}{B}\right),
\end{equation*}
hence $A/B$ is strictly increasing if $A_0/B_0<1$ and strictly decreasing if $A_0/B_0>1$. Since $A/B$ is bounded and monotone, it converges to some constant $\alpha>0$. By l'H\^{o}spital' rule, 
\begin{equation*}
\alpha=\lim_{t\nearrow T}\frac{A}{B}=\lim_{t\nearrow T}\frac{-4\left(A/B\right)^2}{-8+4\left(A/B\right)}=\frac{-4\alpha^2}{-8+4\alpha}.
\end{equation*}
Hence we obtain $\alpha=1$. 

We define the functions $f$ and $h$ on $\mathbb{R}$ respectively as
\begin{equation*}
f(x):=x(1-x)\quad\mathrm{and}\quad h(x):=x(x-1).
\end{equation*}
Since $A/B\nearrow 1$ if $A_0/B_0<1$, the function $f(A/B)$ has a maximal value at $t_0$ with $A(t_0)/B(t_0)=1/2$ and $f(A/B)\to 0$ if $A_0/B_0<1/2$, and $f(A/B)\searrow 0$ if $1/2\le A_0/B_0<1$. Since $A/B\searrow 1$ if $1<A_0/B_0$, the function $h(A/B)\searrow 0$ if $1<A_0/B_0$. Hence if $0<A_0/B_0<1/2$, the $L^1$-norm $C(g)$ has a maximal value at $t_0$ with $A(t_0)/B(t_0)=1/2$ and converges to zero as $t \to T$. If $1/2\le A_0/B_0<1$ or $1<A_0/B_0$, it is strictly decreasing and converges to zero as $t \to T$. 
\end{proof}

\subsection{The Lie group $\widetilde{\mathrm{Isom}(\mathbb{R}^2)}$}
We consider the Ricci flow $g(t)$ starting at a left invariant metric $g_0$ on $\widetilde{\mathrm{Isom}(\mathbb{R}^2)}$, and fix a Milnor frame for $g_0$ such that $\lambda=\mu=-1$ and $\nu=0$.

The Ricci tensor $g$ is
\begin{equation*}
R(F_1,F_1)=-2\frac{B^2-A^2}{BC},\;
R(F_2,F_2)=-2\frac{A^2-B^2}{AC},\;
R(F_3,F_3)=-2\frac{(A-B)^2}{AB},
\end{equation*}
and the scalar curvature $g$ is
\begin{equation*}
R=-2\frac{(A-B)^2}{ABC}.
\end{equation*}
Then the Ricci flow equation is equivalent to the system of ODE's
\begin{equation*}
\left\{
\begin{split}
&\frac{d}{dt}A=4\frac{B^2-A^2}{BC},\\
&\frac{d}{dt}B=4\frac{A^2-B^2}{AC},\\
&\frac{d}{dt}C=4\frac{(A-B)^2}{AB}.
\end{split}
\right.
\end{equation*}
By the direct computation, we can show $\frac{d}{dt}(AB)=\frac{d}{dt}(C(A+B))=0$. 
\begin{prop}\label{PR2}(\cite{K})
For any choice of initial data $A_0$, $B_0$, $C_0>0$, the unique solution $g(t)$ exists for all positive time. For any $\varepsilon>0$, there exists $T_\varepsilon>0$ such that
\begin{equation*}
\left|A-\sqrt{A_0B_0}\right|\le\varepsilon,\quad\left|B-\sqrt{A_0B_0}\right|\le\varepsilon,\quad\left|C-\frac{C_0}{2}\left(\sqrt{\frac{A_0}{B_0}}+\sqrt{\frac{B_0}{A_0}}\right)\right|\le\varepsilon
\end{equation*}
for all $t\ge T_\varepsilon$. 
Moreover, as $t \nearrow \infty$, $B/A\nearrow 1$ if $B_0/A_0<1$, $B/A\searrow 1$ if $1<B_0/A_0$, and $B/A=1$ if $B_0/A_0=1$.
\end{prop}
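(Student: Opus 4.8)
The plan is to reduce the three unknowns to the single ratio $u := B/A$ by exploiting the two conserved quantities $AB \equiv A_0B_0$ and $C(A+B) \equiv C_0(A_0+B_0)$ recorded just before the statement, and then to confine the whole trajectory to a fixed compact subset of the positive octant. First I would compute the evolution of the ratio: a direct calculation from the ODE's gives
\[
\frac{d}{dt}\frac{B}{A} = \frac{B'A - A'B}{A^2} = 8\,\frac{A^2 - B^2}{A^2 C},
\]
so the sign of $(d/dt)(B/A)$ agrees with that of $A^2 - B^2$. Consequently $u$ strictly increases toward $1$ when $B_0/A_0 < 1$, strictly decreases toward $1$ when $B_0/A_0 > 1$, and stays equal to $1$ when $A_0 = B_0$; in every case $u$ is monotone and trapped between its initial value and $1$, which already yields the final assertion once convergence of $u$ to $1$ is established.

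For global existence I would argue by confinement. Since $AB$ is constant and $u$ lies in a compact subinterval of $(0,\infty)$, the identities $A = \sqrt{A_0B_0/u}$ and $B = \sqrt{A_0B_0\,u}$ show that $A$ and $B$ are bounded above and below by positive constants; the second conserved quantity then gives $C = C_0(A_0+B_0)/(A+B)$, which is likewise bounded away from $0$ and $\infty$. Thus the orbit remains in a fixed compact subset of $\{A,B,C>0\}$, where the vector field is smooth, so by the standard continuation criterion the maximal solution cannot terminate in finite time and exists for all $t \ge 0$.

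It remains to identify the limit of $u$ and to extract the limits of $A$, $B$, $C$. Being monotone and bounded, $u$ converges to some $\ell \in (0,\infty)$, and the crux is to show $\ell = 1$. I would do this by contradiction through the evolution of $C$: if $\ell \ne 1$ then $A - B$ stays bounded away from $0$, so $C' = 4(A-B)^2/(AB) = 4(A-B)^2/(A_0B_0)$ is eventually bounded below by a positive constant and $C \to \infty$, contradicting the boundedness of $C$ above. Hence $\ell = 1$, and substituting into $A = \sqrt{A_0B_0/u}$, $B = \sqrt{A_0B_0\,u}$, $C = C_0(A_0+B_0)/(A+B)$ yields $A, B \to \sqrt{A_0B_0}$ and $C \to \tfrac{C_0}{2}\bigl(\sqrt{A_0/B_0}+\sqrt{B_0/A_0}\bigr)$; the $\varepsilon$--$T_\varepsilon$ statement is merely the definition of these limits.

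I expect the isolation of $\ell = 1$ to be the only delicate point, since monotonicity and boundedness of $u$ guarantee convergence to \emph{some} limit but not its value, and one must import the dynamics of $C$ to exclude a limit with $A \ne B$. An alternative that sidesteps the contradiction is to observe that $u$ satisfies the closed autonomous equation $u' = \kappa\,(1-u)(1+u)^2/\sqrt{u}$ with $\kappa = 8\sqrt{A_0B_0}/\bigl(C_0(A_0+B_0)\bigr) > 0$, whose unique positive equilibrium is $u = 1$; a bounded monotone solution of such an equation necessarily tends to that equilibrium.
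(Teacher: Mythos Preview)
Your argument is correct, but note that the paper does not supply its own proof of this proposition: it is quoted from Knopf--McLeod and stated without proof, so there is no argument in the paper to compare against. Your reduction to the single variable $u=B/A$ via the conserved quantities $AB$ and $C(A+B)$ is clean and standard; it confines the orbit to a compact subset of the positive octant and hence gives global existence, and either the contradiction through $C'=4(A-B)^2/(A_0B_0)\ge\text{const}>0$ or the closed autonomous equation $u'=\kappa(1-u)(1+u)^2/\sqrt{u}$ correctly forces the limiting ratio to be $1$, from which the asserted limits of $A$, $B$, $C$ follow immediately.
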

The behavior of the $L^1$-norm of the Cotton-York tensor $C_2$ is given by the next result.
\begin{thm}\label{R2}
For any choice of initial data $A_0$, $B_0$, $C_0>0$, the behavior of the $L^1$-norm $C_K(g)$ of the Cotton-York tensor on an arbitrary compact set $K$ is the following: 
\begin{enumerate}
\item If $A_0\ne B_0$, $C_K(g)$ is strictly decreasing and converges to zero as $t\to \infty$.
\item If $A_0=B_0$, $C_K(g)$ is identically zero.
\end{enumerate}
\end{thm}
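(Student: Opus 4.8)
The plan is to follow the same template as the proof of Theorem~\ref{SU}: first obtain a closed expression for $C_K(g)$ as an explicit function of $A,B,C$, then reduce it to a function of a single monotone variable using the conserved quantities, and finally read off the behavior from Proposition~\ref{PR2}. First I would compute the Cotton-York tensor $C_2$ in the Milnor frame $\{F_i\}$ with $\lambda=\mu=-1$, $\nu=0$. As in the $\mathrm{SU}(2)$ case, $C_2$ is diagonal with respect to this frame, so the task reduces to finding the three components $C_2(F_i,F_i)$ from the given diagonal Ricci tensor via $C_{ij}=g_{ik}\varepsilon^{klm}(\nabla_lR_{mj}-\tfrac14\nabla_lRg_{mj})$, the covariant derivatives being computed from the left-invariant Christoffel symbols. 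Each component comes out as an explicit algebraic function of $A,B,C$ that vanishes when $A=B$. This immediately settles assertion~(2): when $A_0=B_0$ the symmetry of the ODE system forces $A\equiv B$ along the flow, hence $R=-2(A-B)^2/(ABC)\equiv0$ and $C_2\equiv0$, so $C_K(g)\equiv0$.

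For assertion~(1), since both $g$ and $C_2$ are diagonal in the Milnor frame, I would assemble
\[
|C_2|_g=\sqrt{\frac{C_2(F_1,F_1)^2}{A^2}+\frac{C_2(F_2,F_2)^2}{B^2}+\frac{C_2(F_3,F_3)^2}{C^2}},\qquad d\mu_g=\sqrt{ABC}\,\omega^1\wedge\omega^2\wedge\omega^3,
\]
so that $C_K(g)=|C_2|_g\sqrt{ABC}\cdot V_K$, where $V_K:=\int_K\omega^1\wedge\omega^2\wedge\omega^3$ is independent of $t$ and the integrand $|C_2|_g\sqrt{ABC}$ is spatially constant by left-invariance. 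I would then invoke the two conserved quantities $AB=A_0B_0$ and $C(A+B)=C_0(A_0+B_0)$ to eliminate two of the three metric coefficients, writing $C_K(g)=\Phi(u)\,V_K$ for a single explicit function $\Phi$ of the ratio $u:=B/A$.

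The time dependence then follows by the chain rule together with Proposition~\ref{PR2}, which gives $u\to1$ monotonically, with $u\nearrow1$ when $B_0/A_0<1$ and $u\searrow1$ when $1<B_0/A_0$. It therefore suffices to verify that $\Phi$ is a smooth well centered at $u=1$: namely $\Phi(u)>0$ for $u\neq1$, $\Phi(1)=0$, and $\Phi$ strictly decreasing on $(0,1)$ and strictly increasing on $(1,\infty)$. I expect the substitution $u\mapsto1/u$, which interchanges $A$ and $B$, to leave $\Phi$ invariant, so that checking monotonicity on one side of $u=1$ suffices. Granting this, $\Phi(u)$ is strictly decreasing in $t$ on each branch and tends to $\Phi(1)=0$ as $u\to1$, which is precisely assertion~(1).

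The main obstacle is twofold. The laborious part is the explicit evaluation of the three Cotton-York components, since, unlike the $\mathrm{SU}(2)$ case where the hypothesis $B=C$ collapses much of the algebra, here $A$, $B$, and $C$ are genuinely distinct and evolve independently. The delicate part is establishing the strict monotonicity of the reduced function $\Phi$ on each side of $u=1$: in contrast to $\mathrm{SU}(2)$, whose analogous one-variable profile $x(1-x)$ carries an interior maximum that produces a local extremum of the $L^1$-norm, here I must confirm that \emph{no} interior critical point of $\Phi$ occurs, so that the norm is genuinely monotone throughout the flow. This reduces to a sign analysis of $\Phi'(u)$, which I would carry out once the explicit form of $\Phi$ is in hand.
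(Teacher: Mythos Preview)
Your plan is correct and essentially matches the paper's proof: compute $C_2$ in the Milnor frame, assemble $C_K(g)$ as an explicit left-invariant quantity, use $AB=A_0B_0$ to reduce to a function of the ratio $B/A$, and exploit the symmetric well-shape at $1$ together with Proposition~\ref{PR2}. The one organizational difference worth noting is that the paper does \emph{not} use the second conserved quantity $C(A+B)=\mathrm{const}$ to absorb $C$ into a single $\Phi(u)$; instead it leaves the factor $1/C$ separate in the formula
\[
C_K(g)=f(B/A)\cdot\frac{4\sqrt{A_0B_0}}{C}\cdot\mathrm{Vol}(K,h)
\]
and observes directly from the ODE $\frac{d}{dt}C=4(A-B)^2/(AB)>0$ (valid whenever $A_0\ne B_0$) that $1/C$ is strictly decreasing in $t$. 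Thus $C_K(g)$ is the product of two factors that are each strictly decreasing in $t$, which sidesteps the sign analysis of $\Phi'$ you flag as the delicate step.
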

\begin{proof}
The Cotton-York tensor is
\begin{equation*}
\begin{split}
C_2(F_1,F_1)&=\frac{4A}{(ABC)^\frac{3}{2}}(2A^3-B^3-A^2B),\\
C_2(F_2,F_2)&=\frac{4B}{(ABC)^\frac{3}{2}}(2B^3-A^3-AB^2),\\
C_2(F_3,F_3)&=-\frac{4C}{(ABC)^\frac{3}{2}}(A+B)(A-B)^2.\\
\end{split}
\end{equation*}
Then for an arbitrary compact set $K$,
\begin{equation*}
\begin{split}
&\int_K|C_2|_gd\mu_g\\
&=\left(6\left(\frac{A}{B}\right)^3-6\left(\frac{A}{B}\right)^2+2\left(\frac{A}{B}\right)+6\left(\frac{B}{A}\right)^3-6\left(\frac{B}{A}\right)^2+2\left(\frac{B}{A}\right)-4\right)^{\frac{1}{2}}\\
&\times\frac{4(A_0B_0)^\frac{1}{2}}{C}\mathrm{Vol}(K,h)\\
\end{split}
\end{equation*}
where $h=\omega^1\otimes\omega^1+\omega^2\otimes\omega^2+\omega^3\otimes\omega^3$.

If $A_0=B_0$, $C_K(g)$ is identically zero. We assume that $A_0\ne B_0$. We define the function $f$ on $\mathbb{R}$ as
\begin{equation*}
f(x):=\left(6\left(\frac{1}{x}\right)^3-6\left(\frac{1}{x}\right)^2+2\left(\frac{1}{x}\right)+6x^3-6x^2+2x-4\right)^{\frac{1}{2}}.
\end{equation*}
The function $f$ is strictly decreasing if $0<x\le 1$ and strictly increasing if $1<x$. By Proposition \ref{PR2}, as $t \nearrow \infty$, $f(B/A)\searrow 0$ if $B_0/A_0<1$ and $f(B/A)\searrow 0$ if $1<B_0/A_0$. Clearly $1/C$ is strictly decreasing, hence $C_K(g)$ is strictly decreasing and converges to zero as $t \to \infty$.  
\end{proof}

\subsection{The Lie group $\widetilde{\mathrm{SL}(2,\mathbb{R})}$}

We consider the Ricci flow $g(t)$ starting at a left invariant metric $g_0$ on $\widetilde{\mathrm{SL}(2,\mathbb{R})}$, and fix a Milnor frame such that $\lambda=-1$ and $\mu=\nu=1$. 

The Ricci tensor of $g$ is 
\begin{equation*}
\begin{split}
R(F_1,F_1)&=-2\frac{(B-C)^2-A^2}{BC},\\
R(F_2,F_2)&=-2\frac{(A+C)^2-B^2}{AC},\\
R(F_3,F_3)&=-2\frac{(A+B)^2-C^2}{AB}.
\end{split}
\end{equation*}
Then the Ricci flow equation is equivalent to the system of ODE's
\begin{equation*}
\left\{
\begin{split}
&\frac{d}{dt}A=4\frac{(B-C)^2-A^2}{BC},\\
&\frac{d}{dt}B=4\frac{(A+C)^2-B^2}{AC},\\
&\frac{d}{dt}C=4\frac{(A+B)^2-C^2}{AB}.
\end{split}
\right.
\end{equation*}
\begin{prop}(\cite{K})
For any choice of initial data $A_0$, $B_0$, $C_0>0$, the unique solution $g(t)$ exists for all positive time. There exists $A_\infty=A_\infty(A_0,B_0,C_0)>0$ such that for any $\varepsilon>0$, there exists $T_\varepsilon>0$ such that
\begin{equation*}
\left|A-A_\infty\right|\le\varepsilon,\quad\left|\frac{d}{dt}B-8\right|\le\varepsilon,\quad\left|\frac{d}{dt}C-8\right|\le\varepsilon
\end{equation*}
for all $t\ge T_\varepsilon$.  
\end{prop}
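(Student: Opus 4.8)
The statement is quoted from \cite{K}, but here is how I would establish it directly from the ODE system. Exploiting the $B\leftrightarrow C$ symmetry of the equations (swapping $B$ and $C$ interchanges the $B$- and $C$-equations and fixes the $A$-equation), it suffices to treat the cases $B_0=C_0$ and, after relabeling, $B_0>C_0$. When $B_0=C_0$ one has $B\equiv C$, the system collapses to $\dot A=-4(A/B)^2$ and $\dot B=8+4A/B$, and the claims follow at once: $A$ is decreasing with integrable derivative (since $B$ grows linearly) and hence converges to some $A_\infty>0$, while $\dot B\to 8$ and $\dot C=\dot B\to 8$. So I focus on $B_0>C_0$.

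The engine of the proof is three algebraic identities obtained by direct differentiation. First,
\begin{equation*}
\frac{d}{dt}(BC)=8(A+B+C),
\end{equation*}
which, together with $A>0$ and $B+C\ge 2\sqrt{BC}$, gives the \emph{unconditional} lower bound $\sqrt{BC}\ge\sqrt{B_0C_0}+8t$. Second,
\begin{equation*}
\frac{d}{dt}(B-C)=\frac{4(B-C)(A-B-C)(A+B+C)}{ABC},
\end{equation*}
a linear homogeneous ODE for $w:=B-C$, so the sign of $w$ is preserved and $B>C$ for all $t$. Third, adding the logarithmic derivatives of $A$ and of $B-C$ produces the clean relation
\begin{equation*}
\frac{d}{dt}\log\big(A(B-C)\big)=-\frac{16}{A}<0,
\end{equation*}
whence $A(B-C)$ is strictly decreasing and $A(B-C)\le A_0(B_0-C_0)=:M_0$ for all $t$.

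With these in hand I would control $A$ without circularity. From $\dot A=4(w^2-A^2)/(BC)$ and $w\le M_0/A$ one gets $A^2\dot A\le 4M_0^2/(BC)$, i.e. $(A^3)'\le 12M_0^2/(\sqrt{B_0C_0}+8t)^2$; the right-hand side is integrable on $[0,\infty)$, so $A$ is bounded above by some $a^*<\infty$. Feeding this back into $\tfrac{d}{dt}\log A\ge -4A/(BC)\ge -4a^*/(\sqrt{B_0C_0}+8t)^2$, again integrable, yields a positive lower bound $A\ge a_*>0$. The limits then follow in order: $A(B-C)\to 0$ exponentially (third identity with $A$ bounded below), hence $w=B-C\to 0$; combined with $B+C\ge 2\sqrt{BC}\to\infty$ this forces $B,C\to\infty$ with $B/C\to 1$. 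Since $|w^2-A^2|$ is bounded while $BC\gtrsim t^2$, the quantity $\dot A$ is integrable and $A\to A_\infty\in[a_*,a^*]$. Finally, writing $\dot B=8+4A/C-8w/A-4w^2/(AC)$ and the symmetric expression for $\dot C$, every correction term tends to $0$, so $\dot B,\dot C\to 8$. Long-time existence ($T_{\max}=\infty$) is then immediate: the a priori bounds keep $A$ between $a_*$ and $a^*$, force $BC$ to grow, and keep $B,C$ positive, so no variable blows up or degenerates in finite time.

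The main obstacle is precisely the simultaneous control of $A$ from above and below: the naive attempt is circular, since bounding $\dot A$ appears to need a bound on $w=B-C$, while bounding $w$ appears to need a bound on $A$. The identity $\tfrac{d}{dt}\log(A(B-C))=-16/A$ breaks this loop by bounding the \emph{product} $A(B-C)$ monotonically, and the unconditional quadratic growth of $BC$ then makes the relevant integrals converge. Locating these two identities, the conserved-type evolution of $BC$ and the logarithmic combination, is the one genuinely non-routine step; everything else is bookkeeping.
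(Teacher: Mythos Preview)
The paper does not prove this proposition; it is quoted from \cite{K} without argument, so there is no ``paper's own proof'' to compare against. Your direct proof is essentially correct. The three identities
\[
\frac{d}{dt}(BC)=8(A+B+C),\qquad
\frac{d}{dt}(B-C)=\frac{4(B-C)(A-B-C)(A+B+C)}{ABC},\qquad
\frac{d}{dt}\log\big(A(B-C)\big)=-\frac{16}{A}
\]
all check out by direct computation, and the way you combine them to break the apparent circularity between bounding $A$ and bounding $w=B-C$ is clean: the unconditional quadratic growth $\sqrt{BC}\ge\sqrt{B_0C_0}+8t$ together with the monotone bound $Aw\le M_0$ gives integrability of $(A^3)'$ from above, then of $(\log A)'$ from below, and everything else follows.

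Two small points. First, in the parenthetical ``$A(B-C)\to 0$ exponentially (third identity with $A$ bounded below)'' you mean $A$ bounded \emph{above}: it is $-16/A\le -16/a^*$ that drives $\log(Aw)\to-\infty$; the lower bound $A\ge a_*$ is what then converts $Aw\to 0$ into $w\to 0$. You have both bounds, so this is only a slip of wording. Second, in the $B_0=C_0$ case, ``integrable derivative'' alone gives $A\to A_\infty\ge 0$; to get $A_\infty>0$ you should integrate $(\log A)'=-4A/B^2\ge -4A_0/(B_0+8t)^2$ instead, which is also integrable. For long-time existence, the step you gloss over is that $B$ and $C$ individually stay bounded and bounded away from zero on finite intervals; this follows since $(B+C)^2=(B-C)^2+4BC$ with $B-C$ bounded and $(BC)'\lesssim\sqrt{BC}$, so $BC$ grows at most quadratically, and then $C=BC/B\ge B_0C_0/(B+C)$ stays positive.
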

Now we are interested in the behavior of the $L^1$-norm of the Cotton-York tensor $C_2$. Since the $L^1$-norm is very complicated for general initial data, we assume that $B_0=C_0$. Then $B(t)=C(t)$ holds from the symmetry in the Ricci flow equation. 
\begin{thm}\label{SL}
For any choice of initial data $A_0$, $B_0=C_0>0$, the $L^1$-norm $C_K(g)$ of the Cotton-York tensor on an arbitrary compact set $K$ is strictly decreasing and converges to zero as $t\to\infty$. 
\end{thm}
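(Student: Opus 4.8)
The plan is to mirror the treatment of $\mathrm{SU}(2)$ in Theorem~\ref{SU}, exploiting the symmetry reduction $B=C$. First I would substitute $B=C$ into the ODE system, which collapses it to the two-component system
\begin{equation*}
\frac{d}{dt}A=-4\left(\frac{A}{B}\right)^2,\qquad \frac{d}{dt}B=8+4\frac{A}{B}.
\end{equation*}
Because $g$ and $\mathrm{Ric}_g$ are simultaneously diagonalized in the Milnor frame, the Cotton-York tensor $C_2$ is again diagonal, and I would compute its components $C_2(F_1,F_1)$ and $C_2(F_2,F_2)=C_2(F_3,F_3)$ directly from $C_{ij}=\tfrac12 g_{ik}\varepsilon^{klm}C_{lmj}$, exactly as in the previous subsections.

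The second step is to assemble the integrand. Writing $x:=A/B$, the homothety invariance of the density $|C_2|_g\,d\mu_g$ (a constant rescaling $A,B,C\mapsto\lambda A,\lambda B,\lambda C$ leaves it pointwise invariant, since $C_{ij}$ scales by $\lambda^{-1/2}$, hence $|C_2|_g$ by $\lambda^{-3/2}$ and $d\mu_g$ by $\lambda^{3/2}$) forces $|C_2|_g\,d\mu_g$ to be a function of the single scale-invariant ratio $x$ times the frame volume $d\mu_h$, where $h=\omega^1\otimes\omega^1+\omega^2\otimes\omega^2+\omega^3\otimes\omega^3$. Carrying out the computation I expect
\begin{equation*}
\int_K|C_2|_g\,d\mu_g=\Phi(x)\,\mathrm{Vol}(K,h)
\end{equation*}
for an explicit $\Phi$ carrying an overall factor of $x$, so that $\Phi(0^+)=0$.

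Third, I would analyze the scalar ODE for $x$. Differentiating,
\begin{equation*}
\frac{d}{dt}\frac{A}{B}=\frac{\dot A B-A\dot B}{B^2}=-\frac{8A}{B^2}\left(\frac{A}{B}+1\right)<0,
\end{equation*}
so $x$ is strictly decreasing for every initial datum, with no sign-change subtleties of the kind that produced the local extremum in the $\mathrm{SU}(2)$ case. Since $\dot B>8$ gives $B\to\infty$ while $A$ is decreasing and bounded below by $0$, we get $x\searrow 0$; this is also consistent with the cited result of Knopf, where $A\to A_\infty>0$ and $B$ grows linearly. Consequently $\tfrac{d}{dt}\int_K|C_2|_g\,d\mu_g=\Phi'(x)\,\dot x\,\mathrm{Vol}(K,h)$, and the theorem reduces to the single analytic claim that $\Phi$ is strictly increasing on $(0,\infty)$ together with $\Phi(0^+)=0$.

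The main obstacle is precisely this last claim: because $A_0/B_0$ is arbitrary, the trajectory of $x$ sweeps out all of $(0,\infty)$, so I cannot localize and must verify $\Phi'>0$ globally. I would do this by working with $\Phi^2$, which is a rational function of $x$ once the square root is removed, clearing denominators, and checking that the resulting numerator polynomial is positive on $(0,\infty)$. The convergence $\Phi(x(t))\to 0$ is then immediate from $x\searrow 0$ and $\Phi(0^+)=0$, which fits the geometric picture that the rescaled flow collapses to the conformally flat model $\mathbb{H}^2\times\mathbb{R}$.
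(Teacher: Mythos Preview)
Your proposal is correct and follows the same route as the paper: reduce to the two-component ODE, compute the diagonal Cotton--York components, express $C_K(g)$ as $\Phi(A/B)\,\mathrm{Vol}(K,h)$, verify $\tfrac{d}{dt}(A/B)=-8\tfrac{A}{B^2}\bigl(\tfrac{A}{B}+1\bigr)<0$ and $A/B\to 0$. The only remark is that the ``main obstacle'' you anticipate dissolves: the explicit computation gives $\Phi(x)=4\sqrt{6}\,x(1+x)$, so $\Phi'(x)=4\sqrt{6}(1+2x)>0$ on $(0,\infty)$ is immediate and no squaring or denominator-clearing is needed.
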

\begin{proof}
In this case, the Ricci flow equation is reduced to
\begin{equation*}
\frac{d}{dt}A=-4\left(\frac{A}{B}\right)^2,\quad\frac{d}{dt}B=4\frac{A}{B}+8,
\end{equation*}
and the scalar curvature is 
\begin{equation*}
R=-\frac{2(A+4B)}{B^2}.
\end{equation*}
The Cotton-York tensor is
\begin{equation*}
C_2(F_1,F_1)=\frac{8A^3(A+B)}{(AB^2)^\frac{3}{2}},\quad
C_2(F_2,F_2)=C_2(F_3,F_3)=-\frac{4A^2B(A+B)}{(AB^2)^\frac{3}{2}}.
\end{equation*}
Then for an arbitrary compact set $K$,
\begin{equation*}
\int_K|C_2|_gd\mu_g=4\sqrt{6}\frac{A}{B}\left(1+\frac{A}{B}\right)\mathrm{Vol}(K,h),
\end{equation*}
where $h=\omega^1\otimes\omega^1+\omega^2\otimes\omega^2+\omega^3\otimes\omega^3$. 

The function $A/B$ is strictly decreasing and converges to zero as $t\to\infty$. Indeed,
\begin{equation*}
\begin{split}
\frac{d}{dt}\frac{A}{B}&=\frac{-4(A/B)^2B-A\{4(A/B)+8\}}{B^2}=-8\frac{A}{B^2}\left(\frac{A}{B}+1\right)<0,
\end{split}
\end{equation*}
and $\lim_{t\to\infty}(A/B)=A_\infty/\infty=0$. Hence $C_K(g)$ is strictly decreasing and converges to zero as $t\to\infty$.
\end{proof}

\subsection{The Heisenberg group}
We consider the Ricci flow $g(t)$ starting at a left invariant metric $g_0$ on the Heisenberg group, and fix a Milnor frame for $g_0$ such that $\lambda=-1$ and $\mu=\nu=0$. 

The Ricci tensor of $g$ is
\begin{equation*}
R(F_1,F_1)=2\frac{A^2}{BC},\quad R(F_2,F_2)=-2\frac{A}{C},\quad R(F_3,F_3)=-2\frac{A}{B},
\end{equation*}
and the scalar curvature of $g$ is
\begin{equation*}
R=-2\frac{A}{BC}.
\end{equation*}
Then the Ricci flow equation is equivalent to the system of ODE's
\begin{equation*}
\left\{
\begin{split}
&\frac{d}{dt}A=-4\frac{A^2}{BC},\\
&\frac{d}{dt}B=4\frac{A}{C},\\
&\frac{d}{dt}C=4\frac{A}{B}.
\end{split}
\right.
\end{equation*}
\begin{prop}(\cite{K})
For any choice of initial data $A_0$, $B_0$, $C_0>0$, the unique solution $g(t)$ exists for all positive time. Moreover, the above system of ODE's is solved explicitly:
\begin{equation*}
\begin{split}
&A=A_0^{\frac{2}{3}}B_0^{\frac{1}{3}}C_0^{\frac{1}{3}}\left(12t+\frac{B_0C_0}{A_0}\right)^{-\frac{1}{3}},\\
&B=A_0^{\frac{1}{3}}B_0^{\frac{2}{3}}C_0^{-\frac{1}{3}}\left(12t+\frac{B_0C_0}{A_0}\right)^{\frac{1}{3}},\\
&C=A_0^{\frac{1}{3}}B_0^{-\frac{1}{3}}C_0^{\frac{2}{3}}\left(12t+\frac{B_0C_0}{A_0}\right)^{\frac{1}{3}}
\end{split}
\end{equation*}
for $t\in (-B_0C_0/A_0,\infty)$.
\end{prop}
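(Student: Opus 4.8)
The plan is to collapse the coupled three-dimensional system into a single separable scalar ODE by finding first integrals, solve that ODE explicitly, and then recover the remaining two functions algebraically; existence and uniqueness will follow from the explicit formula together with the standard ODE theory.

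First I would search for conserved quantities of the system. A short computation shows that the products $AB$ and $AC$ are both constant along the flow: for instance
\[
\frac{d}{dt}(AB)=A'B+AB'=-4\frac{A^2}{BC}\,B+A\cdot 4\frac{A}{C}=-4\frac{A^2}{C}+4\frac{A^2}{C}=0,
\]
and the same cancellation occurs for $\frac{d}{dt}(AC)$, owing to the $A^2/BC$, $A/C$, $A/B$ structure of the right-hand sides. Hence $AB\equiv A_0B_0$ and $AC\equiv A_0C_0$.

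Next I would use these integrals to decouple the $A$-equation. Since $BC=(AB)(AC)/A^2=A_0^2B_0C_0/A^2$, substituting into $\frac{d}{dt}A=-4A^2/(BC)$ yields the autonomous equation
\[
\frac{d}{dt}A=-\frac{4}{A_0^2B_0C_0}\,A^4 .
\]
This is separable; integrating with $A(0)=A_0$ gives $A^{-3}=A_0^{-3}+12t/(A_0^2B_0C_0)$, which rearranges to the stated closed form for $A$. I would then recover $B=A_0B_0/A$ and $C=A_0C_0/A$ directly from the two first integrals, producing exactly the claimed expressions. For existence and uniqueness, I would note that the right-hand side of the system is smooth, hence locally Lipschitz, on the open positive octant $\{A,B,C>0\}$, so the Picard--Lindel\"of theorem gives a unique local solution; the explicit formula then shows the solution stays in the positive octant and persists as long as the factor $12t+B_0C_0/A_0$ remains positive, in particular for all $t>0$.

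The main obstacle is really the very first step: spotting the two quadratic first integrals $AB$ and $AC$. Once they are in hand the problem is essentially trivial, since the system reduces to a single quartic scalar ODE whose integration and the subsequent algebraic recovery of $B$ and $C$ are entirely routine. Without those integrals, however, the system looks genuinely coupled and nonlinear, so identifying the right conserved combinations is where the work lies.
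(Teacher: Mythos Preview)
Your argument is correct. The paper itself does not prove this proposition; it is quoted from \cite{K} (see also \cite[Chapter~1]{CK}) and stated without proof. Your route---identifying the conserved quantities $AB$ and $AC$, reducing to the separable scalar equation $A'=-4A^4/(A_0^2B_0C_0)$, and recovering $B$ and $C$ algebraically---is exactly the standard derivation found in those references, so there is nothing to compare.

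One incidental remark: the condition $12t+B_0C_0/A_0>0$ that you correctly isolate gives the maximal interval $t\in(-B_0C_0/(12A_0),\infty)$, so the interval printed in the paper is missing a factor of $12$; this is a typo in the statement rather than anything in your proof.
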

The behavior of the $L^1$-norm of the Cotton-York tensor $C_2$ is given by the following:
\begin{thm}\label{N}
For any choice of initial data $A_0$, $B_0$, $C_0>0$, the $L^1$-norm $C_K(g)$ of the Cotton-York tensor on an arbitrary compact set $K$ is strictly decreasing and converges to zero as $t\to\infty$. 
\end{thm}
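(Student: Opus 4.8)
The plan is to follow the same three steps used for $\mathrm{SU}(2)$, $\widetilde{\mathrm{Isom}(\mathbb{R}^2)}$ and $\widetilde{\mathrm{SL}(2,\mathbb{R})}$: compute the Cotton-York tensor in the Milnor frame, reduce the $L^1$-norm to an explicit function of $A$, $B$, $C$, and then insert the closed-form solution of the preceding proposition to read off the monotonicity and the limit. Since the explicit solution is already available, no extremum has to be tracked here, in contrast to the $\mathrm{SU}(2)$ case.

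First I would compute $C_2$. Working in the orthonormal frame $e_1=F_1/\sqrt A$, $e_2=F_2/\sqrt B$, $e_3=F_3/\sqrt C$, the only nonzero bracket is $[e_2,e_3]=-2\sqrt{A/(BC)}\,e_1$, so the Levi-Civita connection has just a handful of nonvanishing coefficients coming from the Koszul formula. A left-invariant metric has constant scalar curvature, so the $\nabla R$ terms in $C_3$ drop out and the Cotton tensor reduces to $C_{ijk}=\nabla_iR_{jk}-\nabla_jR_{ik}$; feeding in the diagonal Ricci tensor $\mathrm{Ric}=\mathrm{diag}(\rho,-\rho,-\rho)$ with $\rho=2A/(BC)$ makes the covariant derivatives $\nabla_iR_{jk}$ immediate. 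Converting via $C_{ij}=\tfrac{1}{2} g_{ik}\varepsilon^{klm}C_{lmj}$, I expect $C_2$ to be diagonal and trace-free, with
\begin{equation*}
C_2(F_1,F_1)=\frac{8A^4}{(ABC)^{3/2}},\quad C_2(F_2,F_2)=-\frac{4A^3B}{(ABC)^{3/2}},\quad C_2(F_3,F_3)=-\frac{4A^3C}{(ABC)^{3/2}},
\end{equation*}
and hence $|C_2|_g^2=96\,A^3/(BC)^3$.

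Multiplying by the volume element $d\mu_g=\sqrt{ABC}\,d\mu_h$, where $h=\omega^1\otimes\omega^1+\omega^2\otimes\omega^2+\omega^3\otimes\omega^3$, the $L^1$-norm collapses to
\begin{equation*}
\int_K|C_2|_g\,d\mu_g=4\sqrt6\,\frac{A^2}{BC}\,\mathrm{Vol}(K,h).
\end{equation*}
Substituting the explicit solution then gives $A^2/(BC)=A_0^{2/3}B_0^{1/3}C_0^{1/3}\big(12t+B_0C_0/A_0\big)^{-4/3}$, which is strictly decreasing and tends to $0$ as $t\nearrow\infty$, yielding the claim. One can also establish monotonicity directly from the ODE system, since $\frac{d}{dt}\log\big(A^2/(BC)\big)=2\dot A/A-\dot B/B-\dot C/C=-16A/(BC)<0$, and use the explicit formula only for the limit.

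The only substantive labor is the Cotton-York computation in the first step; everything afterward is a short substitution. The point to watch is verifying that the off-diagonal components of $C_2$ vanish, so that $|C_2|_g^2$ is exactly the sum of the three squared diagonal entries and the clean monomial $A^2/(BC)$ really emerges.
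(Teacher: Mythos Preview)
Your proposal is correct and follows essentially the same route as the paper: compute the diagonal Cotton--York tensor in the Milnor frame, collapse the $L^1$-norm to a constant multiple of $A^2/(BC)$, and then plug in the explicit solution to obtain the $(12t+B_0C_0/A_0)^{-4/3}$ decay. The only discrepancy is the numerical constant in front (you get $4\sqrt{6}$, the paper writes $2\sqrt{6}$), which is immaterial for the monotonicity and the limit; your additional ODE check $\frac{d}{dt}\log(A^2/(BC))=-16A/(BC)<0$ is a nice independent confirmation not in the paper.
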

\begin{proof}
The Cotton-York tensor is
\begin{equation*}
C_2(F_1,F_1)=\frac{8A^2}{BC}\sqrt{\frac{A}{BC}},\;
C_2(F_2,F_2)=-\frac{4A^2}{C\sqrt{ABC}},\;
C_2(F_3,F_3)=-\frac{4A^2}{B\sqrt{ABC}}.
\end{equation*}
Then for an arbitrary compact set $K$,
\begin{equation*}
\begin{split}
\int_{K}|C_2(t)|_{g(t)}d\mu_{g(t)}&=2\sqrt{6}\frac{A^2}{BC}\mathrm{Vol}(K,h)\\
&=2\sqrt{6}A_0^\frac{2}{3}B_0^\frac{1}{3}C_0^\frac{1}{3}\left(12t+\frac{B_0C_0}{A_0}\right)^{-\frac{4}{3}}\mathrm{Vol}(K,h),
\end{split}
\end{equation*}
where $h=\omega^1\otimes\omega^1+\omega^2\otimes\omega^2+\omega^3\otimes\omega^3$. 

Hence $C_K(g)$ is strictly decreasing and converges to zero as $t\to\infty$.
\end{proof}

{\subsection{The Lie group $\mathrm{Isom}(\mathbb{R}^1_1)$}
We consider the Ricci flow $g(t)$ starting at a left invariant metric $g_0$ on $\mathrm{Isom}(\mathbb{R}^1_1)$, and fix a Milnor frame for $g_0$ such that $\lambda=-1$, $\mu=0$, and $\nu=1$.

The Ricci tensor $g$ is
\begin{equation*}
R(F_1,F_1)=-2\frac{C^2-A^2}{BC},\;
R(F_2,F_2)=-2\frac{(A+C)^2}{AC},\;
R(F_3,F_3)=-2\frac{A^2-C^2}{AB}
\end{equation*}
and the scalar curvature $g$ is
\begin{equation*}
R=-2\frac{(A+C)^2}{ABC}.
\end{equation*}
Then the Ricci flow equation is equivalent to the system of ODE's
\begin{equation*}
\left\{
\begin{split}
&\frac{d}{dt}A=4\frac{C^2-A^2}{BC},\\
&\frac{d}{dt}B=4\frac{(A+C)^2}{AC},\\
&\frac{d}{dt}C=4\frac{A^2-C^2}{AB}.
\end{split}
\right.
\end{equation*}
By the direct computation, we can show $\frac{d}{dt}(AC)=\frac{d}{dt}(B(C-A))=0$.
\begin{prop}\label{PR11}(\cite{K})\label{PR11}
For any choice of initial data $A_0$, $B_0$, $C_0>0$, the unique solution $g(t)$ exists for all positive time. For any $\varepsilon>0$, there exists $T_\varepsilon>0$ such that
\begin{equation*}
\left|A-\sqrt{A_0C_0}\right|\le\varepsilon,\quad\left|C-\sqrt{A_0C_0}\right|\le\varepsilon,\quad\left|\frac{d}{dt}B-16\right|\le\varepsilon
\end{equation*}
for all $t\ge T_\varepsilon$. 
Moreover, as $t \nearrow \infty$, $A/C\nearrow 1$ if $A_0/C_0<1$, $A/C\searrow 1$ if $1<A_0/C_0$, and $A/C=1$ if $A_0/C_0=1$.
\end{prop}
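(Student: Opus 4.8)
The plan is to reduce the three ODEs to a single scalar equation by using the two conserved quantities recorded just above, $AC\equiv A_0C_0=:p$ and $B(C-A)\equiv B_0(C_0-A_0)=:q$. First I would eliminate $C=p/A$ to get
\[
\frac{d}{dt}A=\frac{4(C^2-A^2)}{BC}=\frac{4(p^2-A^4)}{ABp}=\frac{4(p-A^2)(p+A^2)}{ABp}.
\]
Since $A,B,p>0$, the sign of $\dot A$ equals the sign of $p-A^2$, so $A$ is strictly increasing toward $\sqrt p$ when $A_0<\sqrt p$ (that is, $A_0/C_0<1$), strictly decreasing toward $\sqrt p$ when $A_0>\sqrt p$, and constant when $A_0=\sqrt p$. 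As $A$ is monotone and trapped between $A_0$ and $\sqrt p$, it converges to some $A_\infty\in(0,\infty)$; since $A/C=A^2/p$, this already gives that $A/C$ is monotone in the stated direction and tends to a limit.

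The crux is to show $A_\infty=\sqrt p$, and this is where I expect the only genuine difficulty: a priori the factor $1/B$ in $\dot A$ decays and $A$ could stall at an interior limit. To rule this out I would invoke the second conservation law. From $C-A=(p-A^2)/A$ one gets $B=qA/(p-A^2)$, so if $A_\infty\neq\sqrt p$ the denominator stays bounded away from $0$ and $B$ would converge to a finite limit. On the other hand $\dot B=4(A+C)^2/p$ together with $A+C\ge 2\sqrt{AC}=2\sqrt p$ forces $\dot B\ge 16$, hence $B\to\infty$. This contradiction yields $A_\infty=\sqrt p$, so $A,C\to\sqrt{A_0C_0}$ and $A/C\to 1$. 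Letting $A+C\to 2\sqrt p$ in $\dot B=4(A+C)^2/p$ then gives $\dot B\to 16$ and, with the monotone convergence of $A$ and $C$, the $\varepsilon$-estimates for $A$, $C$, and $\dot B$ valid for all large $t$.

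Finally, global existence in forward time follows from the standard continuation principle: on any finite interval $A$ and $C=p/A$ stay bounded above and bounded below away from $0$ (monotone with positive limits), while $\dot B=4(A+C)^2/p$ is bounded, so $B$ grows at most linearly and cannot reach $0$ or $\infty$ in finite time. Thus the trajectory remains in a compact subset of $\{A,B,C>0\}$, where the right-hand sides are smooth, and the solution extends to all $t\ge 0$. The essential step is the identity $A_\infty=\sqrt p$; once the two conservation laws are combined with the lower bound $\dot B\ge 16$, the remaining monotonicity and continuation arguments are routine.
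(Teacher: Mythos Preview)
The paper does not prove this proposition; it is quoted from \cite{K} (Knopf--McLeod), so there is no in-paper argument to compare against. Your proof is correct and self-contained, and it uses exactly the two conserved quantities $AC\equiv p$ and $B(C-A)\equiv q$ that the paper records immediately before the statement. The key step---ruling out a stall $A_\infty\neq\sqrt{p}$ by playing the explicit relation $B=qA/(p-A^2)$ against the uniform lower bound $\dot B=4(A+C)^2/(AC)\ge 16$---is clean and is the natural way to exploit those invariants.

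One small point to make explicit: your formula $B=qA/(p-A^2)$ presupposes $q\neq 0$, i.e.\ $A_0\neq C_0$. In the remaining case $A_0=C_0$ the invariant $B(C-A)\equiv 0$ together with $B>0$ forces $A\equiv C\equiv\sqrt{p}$ and $\dot B\equiv 16$, so the conclusion is immediate; it would be worth inserting a sentence to that effect. Also, in the continuation argument the phrase ``monotone with positive limits'' slightly anticipates the conclusion; it is enough (and cleaner) to say that on the maximal interval $A$ is monotone and trapped between $A_0$ and $\sqrt{p}$, hence bounded away from $0$ and $\infty$, which already suffices for extension.
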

Now we are interested in the behavior of the $L^1$-norm of the Cotton-York tensor $C_2$. 
\begin{thm}\label{R11}
For any choice of initial data $A_0$, $B_0$, $C_0>0$, the behavior of the $L^1$-norm $C_K\left(g\right)$ of the Cotton-York tensor on an arbitrary compact set $K$ is strictly decreasing and converges to zero as $t\to \infty$.
\end{thm}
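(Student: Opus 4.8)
The plan is to follow the scheme used in the previous subsections: compute the Cotton-York tensor in the Milnor frame, write $C_K(g)$ as an explicit scalar function of $A,B,C$ times $\mathrm{Vol}(K,h)$ (with $h=\omega^1\otimes\omega^1+\omega^2\otimes\omega^2+\omega^3\otimes\omega^3$), and then analyze that function along the flow using the conservation laws $AC=A_0C_0$, $B(C-A)=B_0(C_0-A_0)$ together with Proposition \ref{PR11}.

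Since $g$ and $\mathrm{Ric}_g$ are both diagonal in the frame $\{F_i\}$, the Cotton-York tensor $C_2$ is again diagonal, and I would obtain its components $C_2(F_i,F_i)$ directly from $C_{ij}=g_{ik}\varepsilon^{klm}(\nabla_lR_{mj}-\tfrac14\nabla_lR\,g_{mj})$ using the Ricci components listed above and the structure constants $\lambda=-1$, $\mu=0$, $\nu=1$. As in the $\widetilde{\mathrm{Isom}(\mathbb{R}^2)}$ case, each component comes out as a polynomial in $A,B,C$ divided by $(ABC)^{3/2}$. Assembling $|C_2|_g=\big(\sum_iC_2(F_i,F_i)^2/g(F_i,F_i)^2\big)^{1/2}$ and multiplying by $d\mu_g=\sqrt{ABC}\,\omega^1\wedge\omega^2\wedge\omega^3$ gives $C_K(g)=\Phi(A,B,C)\,\mathrm{Vol}(K,h)$ for an explicit $\Phi$.

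To analyze $\Phi$ I would exploit that $B$ is strictly increasing along the entire flow, since $\tfrac{d}{dt}B=4(A+C)^2/(AC)>0$, with $B\nearrow\infty$. The two conservation laws determine $A+C=\sqrt{(B_0(C_0-A_0))^2/B^2+4A_0C_0}$ and $C-A=B_0(C_0-A_0)/B$, hence express $A$ and $C$, and therefore $\Phi$, as explicit functions of $B$ alone on $[B_0,\infty)$. The theorem then reduces to showing that $\Phi$ is a strictly decreasing function of $B$ with $\lim_{B\to\infty}\Phi=0$; since $B\nearrow\infty$, this yields that $C_K(g)$ is strictly decreasing to $0$. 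This formulation absorbs the special case $A_0=C_0$ (where $A=C=\sqrt{A_0C_0}$ is constant and $B=B_0+16t$) into the generic argument. As an alternative I could parametrize by $r=A/C$, which tends monotonically to $1$ by Proposition \ref{PR11}; then $\Phi$ becomes a function of $r$ and one checks, as for the function $f$ in Theorem \ref{R2}, that it is monotone on the relevant side of $r=1$.

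The main obstacle is both conceptual and computational. Unlike every earlier example, the limiting shape $A=C$ is the Sol geometry, which is not conformally flat, so $|C_2|_g$ does not vanish in the limit and the decay of $C_K(g)$ cannot be attributed to a shape factor going to zero. Instead the decay must come entirely from the degeneration $B\to\infty$ at fixed area $AC=A_0C_0$, and the crux is to check that the pointwise norm $|C_2|_g$ decays fast enough in $B$ to defeat the volume growth $\sqrt{ABC}=\sqrt{A_0C_0}\,\sqrt{B}$. Concretely, after substituting $A(B),C(B)$ the surviving powers of $B$ in $\Phi$ must combine to a negative net exponent; establishing this, and the strict negativity of $\tfrac{d}{dt}\Phi$ throughout $[B_0,\infty)$ rather than only asymptotically, is the step that requires the most care.
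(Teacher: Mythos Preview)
Your outline is sound and would, if executed, yield a correct proof: parametrising by $B$ via the two conservation laws does reduce $C_K(g)$ to a single-variable function, and one can then verify it is strictly decreasing with limit $0$. Your diagnosis of the conceptual point is also exactly right: the limit is Sol geometry, which is \emph{not} locally conformally flat, so the shape part of $|C_2|_g$ does not tend to zero and the decay must be driven by the degeneration $B\to\infty$.

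The paper, however, bypasses the computation you flag as ``the step that requires the most care'' by observing that the integrand factorises:
\[
C_K(g)=\frac{4(A+C)}{B}\,f\!\left(\frac{A}{C}\right)\mathrm{Vol}(K,h),
\qquad
f(x)=\Bigl(6x(x-1)+\tfrac{6}{x}\bigl(\tfrac{1}{x}-1\bigr)+8\Bigr)^{1/2}.
\]
One then checks directly that $\tfrac{d}{dt}\bigl((A+C)/B\bigr)<0$ with $(A+C)/B\to 0$, while $f(A/C)$ decreases monotonically to the positive constant $2\sqrt{2}$ by Proposition~\ref{PR11}. Thus both factors are monotone and the strict decrease and vanishing of $C_K(g)$ are immediate, with no need to eliminate $A,C$ in favour of $B$ or to differentiate a composite expression. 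Your $B$-parametrisation would reach the same conclusion but with more algebra; the paper's factorisation into a ``size'' factor $(A+C)/B$ that carries all the decay and a bounded ``shape'' factor $f(A/C)$ is the shortcut you are looking for. Note also that your alternative of parametrising by $r=A/C$ alone only works when $A_0\ne C_0$ (otherwise $r\equiv 1$), which is why the paper keeps the factor $(A+C)/B$ explicit rather than re-expressing it through $r$.
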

\begin{proof}
The Cotton-York tensor is
\begin{equation*}
\begin{split}
C_2(F_1,F_1)&=\frac{4A(A+C)}{B\sqrt{ABC}}\left(2\frac{A}{C}+\frac{C}{A}-1\right),\\
C_2(F_2,F_2)&=\frac{4(A+C)}{\sqrt{ABC}}\left(\frac{C}{A}-\frac{A}{C}\right),\\
C_2(F_3,F_3)&=-\frac{4C(A+C)}{B\sqrt{ABC}}\left(2\frac{C}{A}+\frac{A}{C}-1\right).\\
\end{split}
\end{equation*}
Then for an arbitrary compact set $K$,
\begin{equation*}
\begin{split}
\int_K|C_2|_gd\mu_g=\frac{4(A+C)}{B}\left(6\frac{A}{C}\left(\frac{A}{C}-1\right)+6\frac{C}{A}\left(\frac{C}{A}-1\right)+8\right)^\frac{1}{2}\mathrm{Vol}(K,h),
\end{split}
\end{equation*}
where $h=\omega^1\otimes\omega^1+\omega^2\otimes\omega^2+\omega^3\otimes\omega^3$. 

We show that $(A+C)/B$ is strictly decreasing and converges to zero as $t\to\infty$. Indeed,
\begin{equation*}
\begin{split}
&\frac{d}{dt}\frac{A+C}{B}\\
&=\frac{\{4(C^2-A^2)/(BC)-4(A^2-C^2)/(AB)\}B-(A+C)\{(A+C)^2/(AC)\}}{B^2}\\
&=-\frac{8(A^3+A^2C+AC^2+C^3)}{AB^2C}<0,
\end{split}
\end{equation*}
and $\lim_{t\to\infty}(A+C)/B=2\sqrt{A_0C_0}/\infty=0$. 

If $A_0=C_0$, the $L^1$-norm $C_K(g)$ is reduced to 
\begin{equation*}
\int_K|C_2|_gd\mu_g=\frac{8\sqrt 2(A+C)}{B}\mathrm{Vol}(K,h).
\end{equation*}
Hence $C_K(g)$ is strictly decreasing and converges to zero as $t\to\infty$. We assume that $A_0\ne C_0$. We define the function $f$ on $\mathbb{R}$ as
\begin{equation*}
f(x):=\left(6x(x-1)+6\frac{1}{x}\left(\frac{1}{x}-1\right)+8\right)^\frac{1}{2}.
\end{equation*}
The function $f$ is strictly decreasing if $0<x\le 1$ and strictly increasing if $1<x$. By Proposition \ref{PR11}, as $t\nearrow\infty$, $f(A/C)\searrow 2\sqrt{2}$ if $A_0/C_0<1$ and $f(A/C)\searrow 2\sqrt{2}$ if $1<A_0/C_0$. Hence $C_K(g)$ is strictly decreasing and converges to zero as $t\to \infty$.  
\end{proof}

\subsection{The product metric of the Rosenau solution and the standard metric of $S^1$ }

Let $(\mathbb{R}\times S^1(2),dx^2+d\theta^2)$ denote the flat cylinder, where $\theta\in S^1(2)=\mathbb{R}/4\pi\mathbb{Z}$. We define a solution $g(x,\theta,t)$ for $t<0$ to the Ricci flow on $\mathbb{R}\times S^1(2)$ by
\begin{equation*}
g(x,\theta,t)=u(x,\theta,t)(dx^2+d\theta^2)=\frac{\sinh(-t)}{\cosh x+\cosh t}(dx^2+d\theta^2).
\end{equation*}
It is known that the solution $g(x,\theta,t)$ extends to the complete ancient solution to the Ricci flow on $S^2$ (see \cite[pp.\ 162-164]{C}, \cite[pp.\ 31-34]{CK}). This solution on $S^2$ is called the {\it Rosenau solution}. We denote this extended solution by $g$ as well. The scalar curvature of $g$ on $\mathbb{R}\times S^1(2)$ is
\begin{equation*}
R(x,\theta,t)=\frac{\cosh t\cdot\cosh x+1}{\sinh(-t)(\cosh x+\cosh t)}>0
\end{equation*} 
and the scalar curvature $R(\pm\infty,t)$ at the poles $x=\pm\infty$ is
\begin{equation*}
R(\pm\infty,t)=\lim_{|x|\to\infty}\frac{\cosh t\cdot\cosh x+1}{\sinh(-t)(\cosh x+\cosh t)}=\coth(-t)>0.
\end{equation*}
Moreover, the curvature $R(\pm\infty,t)$ at the poles is the maximum curvature of $\left(S^2,g(t)\right)$ for all $t<0$, since we have 
\begin{equation*}
\frac{\partial}{\partial x}R=\frac{\sinh x\cdot\sinh(-t)}{(\cosh x+\cosh t)^2}>0
\end{equation*}
for all $x>0$.
Since $\lim_{t\nearrow 0}R(\pm\infty,t)=\infty$, the Rosenau solution is ancient but not eternal. Due to the fact that for all $(x,\theta)\in\mathbb{R}\times S^1(2)$
\begin{equation*}
\lim_{t\nearrow 0}\frac{R(x,\theta,t)}{R(\pm\infty,t)}=\lim_{t\nearrow 0}\frac{\cosh t\cdot\cosh x+1}{\cosh t\left(\cosh x+\cosh t\right)}=1,
\end{equation*}
the solution shrinks to a round point.

Using the Rousenau solution, we define the Ricci flow on $S^2\times S^1$ by $h(t)=g(t)+d\varphi^2$ for $t<0$, where $d\varphi^2$ is the standard metric of radius one on $S^1$. 
\begin{thm}\label{Pro}
The $L^1$-norm $C(h)$ of the Cotton-York tensor $C_2$ for the product metric $h$ of the Rosenau solution for the Ricci flow on $S^2$ and the standard metric of $S^1$ is strictly decreasing and converges to zero as $t\to 0$.
\end{thm}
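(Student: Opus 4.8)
The plan is to bypass the general evolution equation of Theorem~\ref{L}, whose nonvanishing hypothesis fails here (indeed $|C_2|_h$ will vanish along $x=0$ and at the poles $x=\pm\infty$), and instead compute the Cotton--York tensor of $h(t)=g(t)+d\varphi^2$ pointwise and integrate. Since $h$ is a Riemannian product of the surface $(S^2,g(t))$ with the flat circle, the Ricci tensor splits: writing $K=R/2$ for the Gauss curvature of the Rosenau solution, one has $\mathrm{Ric}_h=K\,g$ on the $S^2$-factor, $\mathrm{Ric}_h=0$ in the $\partial_\varphi$-direction, and $R_h=R$. I would then pass to the Schouten tensor $P=\mathrm{Ric}-\tfrac{R}{4}h$, for which $C_{ijk}=\nabla_iP_{jk}-\nabla_jP_{ik}$; in the adapted orthonormal frame $e_1=u^{-1/2}\partial_x$, $e_2=u^{-1/2}\partial_\theta$, $e_3=\partial_\varphi$ the tensor $P$ is diagonal with $P(e_1,e_1)=P(e_2,e_2)=K/2$ and $P(e_3,e_3)=-K/2$.

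The key simplification is that the $\partial_\varphi$-direction is parallel and flat, so every connection coefficient carrying the index $3$ vanishes, while $K$ (hence $P$) depends on $x$ alone. Recording the surface connection in $\{e_1,e_2\}$ and computing $\nabla P$, one finds that all $\nabla_{e_2}P$ and $\nabla_{e_3}P$ contributions cancel, so only the diagonal terms $(\nabla_{e_1}P)(e_j,e_j)=\pm\tfrac12 e_1(K)$ survive. Consequently the nonzero components of the Cotton tensor are exactly $C_{122}=-C_{212}=\tfrac12 e_1(K)$ and $C_{133}=-C_{313}=-\tfrac12 e_1(K)$, and the Cotton--York tensor reduces to the single off-diagonal pair $C_2(e_2,e_3)=C_2(e_3,e_2)=\tfrac12 e_1(K)$. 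Either directly or via $|C_3|_h=\sqrt2\,|C_2|_h$, this yields the clean pointwise identity
\begin{equation*}
|C_2|_h=\frac{1}{\sqrt2}\,|\nabla K|_h=\frac{|\partial_x K|}{\sqrt{2u}},
\end{equation*}
so the whole problem collapses to a one-dimensional integral (and, as a sanity check, $C_2\equiv0$ iff $K$ is constant, consistent with the conformal flatness of round $S^2\times\mathbb{R}$).

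Next I would substitute the explicit data. Since $K=R/2$, the excerpt's formula $\partial_xR=\sinh x\,\sinh(-t)/(\cosh x+\cosh t)^2$ gives $\partial_xK$, and with $u=\sinh(-t)/(\cosh x+\cosh t)$ the integrand $|C_2|_h\,u$ becomes explicit. The volume element is $d\mu_h=u\,dx\,d\theta\,d\varphi$; integrating $\theta$ over $S^1(2)$ and $\varphi$ over $S^1$ contributes the factors $4\pi$ and $2\pi$, and the remaining integral $\int_{-\infty}^{\infty}|\sinh x|(\cosh x+\cosh t)^{-5/2}\,dx$ is evaluated by the substitution $w=\cosh x$. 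Using the half-angle identity $\sinh\tau/(1+\cosh\tau)=\tanh(\tau/2)$, the result is the closed form
\begin{equation*}
C(h)=\frac{8\sqrt2\,\pi^2}{3}\left(\frac{\sinh(-t)}{1+\cosh t}\right)^{3/2}=\frac{8\sqrt2\,\pi^2}{3}\left(\tanh\frac{-t}{2}\right)^{3/2}.
\end{equation*}
Since $\tanh(-t/2)$ is positive and strictly decreasing in $t$ on $(-\infty,0)$ and tends to $0$ as $t\nearrow0$, the same holds for $C(h)$, which is the assertion.

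I expect the main obstacle to lie in Step~2: although $P$ is diagonal, the Cotton--York tensor turns out to be purely off-diagonal (only the $e_2$--$e_3$ entry), so one must carefully track the cancellations in $\nabla_{e_2}P$ and correctly discard all index-$3$ connection terms. A secondary point is integrability and smoothness at the poles $x=\pm\infty$ of $S^2$: this is guaranteed because $\partial_xK$ decays like $e^{-|x|}$, so $|C_2|_h\to0$ at the poles and the $x$-integral converges, making the closed-form evaluation legitimate.
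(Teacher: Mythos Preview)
Your proof is correct and follows essentially the same approach as the paper: compute the Cotton--York tensor of the product metric directly, integrate explicitly via the substitution $\cosh x\mapsto w$, and arrive at the closed form $\frac{8\sqrt2\,\pi^2}{3}\bigl(\sinh(-t)/(1+\cosh t)\bigr)^{3/2}$, from which strict monotonicity and the limit $0$ as $t\nearrow0$ are immediate. The paper carries out the computation in coordinates rather than in your orthonormal frame, but your intermediate identity $|C_2|_h=\tfrac{1}{\sqrt2}\,|\nabla K|_h$ and the half-angle rewrite $\tanh^{3/2}(-t/2)$ make the argument slightly more transparent.
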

\begin{proof}
On the local coordinate $(x^1,x^2,x^3):=(x,\theta,\varphi)$, the Ricci tensor is 
\begin{equation*}
R_{11}=\frac{\cosh t\cdot\cosh x+1}{2(\cosh x+\cosh t)^2},\quad
R_{22}=\frac{\cosh t\cdot\cosh x+1}{2(\cosh x+\cosh t)^2},\quad
R_{33}=0,
\end{equation*}
and the scalar curvature is
\begin{equation*}
R=\frac{\cosh t\cdot\cosh x+1}{\sinh(-t)(\cosh x+\cosh t)}.
\end{equation*}
The Cotton-York tensor $C_2$ is 
\begin{equation*}
C_{23}=C_{32}=\frac{\sinh x\cdot\sinh(-t)}{4(\cosh x+\cosh t)^2}.
\end{equation*}
Then $L^1$-norm is given by the following:
\begin{align*}
\int_{S^2\times S^1}|C_2(t)|_{h(t)}d\mu_{h(t)}&=\int_{S^1}\left(\int_{S^2}|C_2(t)|_{h(t)}d\mu_{g(t)}\right)d\mu_{d\varphi^\textrm 2}\\
&=2\pi\int_{\mathbb{R}\times S^1(2)}|C_2(t)|_{h(t)}d\mu_{u(x,t)(\mathit{dx}^\textrm 2+d\theta^\textrm 2)}\\
&=2\pi\int_{S^1(2)}\left(\int_{\mathbb R}|C_2(t)|_{h(t)}u(x,t)d\mu_{dx^\textrm 2}\right)d\mu_{d\theta^\textrm 2}\\
&=8\pi^2\int_{\mathbb R}\frac{1}{2\sqrt{2}}\sqrt{\frac{\sinh^2x\cdot\sinh(-t)}{\left(\cosh x+\cosh t\right)^3}}\cdot\frac{\sinh(-t)}{\cosh x+\cosh t}dx\\
&=4\sqrt{2}\pi^2\int_0^\infty\sqrt{\frac{\sinh^2x\cdot\sinh(-t)}{\left(\cosh x+\cosh t\right)^3}}\cdot\frac{\sinh(-t)}{\cosh x+\cosh t}dx\\
&=4\sqrt{2}\pi^2\int_0^\infty\sqrt{\frac{\sinh^3(-t)}{\left(\cosh x+\cosh t\right)^5}}\cdot\sinh x\,dx\\
&=4\sqrt{2}\pi^2\int_1^\infty\sqrt{\frac{\sinh^3(-t)}{\left(y+\cosh t\right)^5}}dy\\
&=\frac{8\sqrt{2}\pi^2}{3}\left(\frac{\sinh(-t)}{1+\cosh(-t)}\right)^\frac{3}{2}.
\end{align*}
Hence $C\left(h\right)$ is strictly decreasing and converges to zero as $t\to 0$.
\end{proof}

\small{}

\textsl{Yoshihiro Umehara}

\textsl{Mathematical Institute, Tohoku University Sendai 980-8578, Japan}

sb1m06@math.tohoku.ac.jp
\end{document}